\numberwithin{equation}{section}
\theoremstyle{definition}
 \newtheorem{theorem}{Theorem}[section]
 \crefname{theorem}{Theorem}{Theorems}
 \newtheorem{proposition}[theorem]{Proposition}
 \crefname{proposition}{Proposition}{Propositions}
 \newtheorem{lemma}[theorem]{Lemma}
 \crefname{lemma}{Lemma}{Lemmas}
 \newtheorem{corollary}[theorem]{Corollary}
 \crefname{corollary}{Corollary}{Corollaries}
 \newtheorem{conjecture}[theorem]{Conjecture}
 \crefname{conjecture}{Conjecture}{Conjectures}
 \newtheorem{question}[theorem]{Question}
 \crefname{question}{Question}{Questions}
 \crefname{problem}{Problem}{Problems}
 \newtheorem{remark}[theorem]{Remark}
 \crefname{remark}{Remark}{Remarks}
\theoremstyle{definition} 
 \newtheorem{definition}[theorem]{Definition}
 \crefname{definition}{Definition}{Definitions}
 \crefname{example}{Example}{Examples}
 \crefname{caution}{Caution}{Cautions}
 \crefname{equation}{formula}{formulas}
\newcommand{\Tr}{\text{Tr}}
\newcommand{\abs}[1]{\left\lvert#1\right\rvert}
\newcommand{\A}{\mathbb{A}}
\newcommand{\C}{\mathbb{C}}
\newcommand{\Q}{\mathbb{Q}}
\newcommand{\Z}{\mathbb{Z}}
\newcommand{\sgn}{\textup{sgn}}
\DeclareMathOperator{\Gal}{Gal} 
\title{Some periodic integer continued fraction expansions of $\sqrt{m}$ and application to the Pell equations}
\author{Yoshinori Kanamura}
\address{Y. Kanamura Department of Mathematics \\ Faculty of Science and Technology, Keio University, 3-14-1, Hiyoshi, Kohoku, Yokohama, Kanagawa, Japan}
\address{Mathematical Science Team \\ RIKEN Center for Advanced Intelligence Project (AIP)\\ 1-4-1 Nihonbashi, Chuo-ku, Tokyo 103-0027, Japan.}
\email{kana1118yoshi@keio.jp}
\author{Hyuga Yoshizaki} 
\address{H. Yoshizaki \\ Department of Mathematics \\ Graduate School of Science and Technology \\ Tokyo University of Science, 2641 Yamazaki, Noda, Chiba, Japan}
\email{yoshizaki.hyuga@gmail.com}
\subjclass[2020]{primary 	11J70;  
	secondary
		11A55; 
		40A15; 
		11D72; 
		}
\keywords{continued fractions, 
    Pell equation,
    Diophantine equations,
    $\Z_2$-extension}
\begin{document}

\begin{abstract}
Periodic integer continued fractions (PICFs) are generalization of the regular periodic continued fractions (RPCFs).
It is classical that a RPCF expansion of an irrational number is unique.
However, it is no longer unique for a PICF expansion. 
Hence it is a natural problem to determine all PICF expansions of irrational numbers.  
In this paper, we determine certain types PICF expansions of square roots of positive square-free integers.
To obtain this result, it plays an important role to determine integer points on certain PCF varieties appeared in \cite{BEJ}.
As an application of these results,
we obtain fundamental solutions of the Pell equations from PICF expansions of square roots of positive square-free integers as well as the RPCF expansions. 
\end{abstract}

\maketitle

\section{Introduction}
For an integer sequence $\{a_n\}_{n\ge0}$ ($a_n\ge1$ for $n\ge1$),
\begin{align}\label{contifrac}
[a_0,a_1,a_2,\ldots]=a_0+\cfrac{1}{a_1+\cfrac{1}{a_2+\cdots}}  
\end{align}
denotes a regular continued fraction (RCF for short).
A RCF (\ref{contifrac}) is periodic if $a_k=a_{l+k}$ for some $l \in \Z_{\geq 1}$, $N \in \Z_{\geq 0}$ and all $k \geq N$.
In the following, we assume that $l$ is the period, that is, $l$ is the smallest integer satisfying the periodic condition. 
Let 
\begin{align*}
    [a_0,\dots,a_{N-1},\overline{a_{N},\dots,a_{N+l-1}}]:=[a_0,\dots,a_{N-1},a_{N},\dots,a_{N+l-1},a_{N},\dots,a_{N+l-1},\dots]  
\end{align*}
denote a regular periodic continued fraction (RPCF for short) and call it a $(N,l)$-type RPCF.
It is well-known that every irrational number has a unique RPCF expansion, 
and every quadratic irrational number has a $(N,l)$-type RPCF expansion for some $N,l$. 

In this paper, we consider periodic integer continued fractions (PICFs for short).
Here, an integer continued fraction is (\ref{contifrac}) in which all $a_n$ are integers (not necessarily positive).
Integer continued fractions (including PICF) have appeared in various researches in geometry (e.g.\ \cite{Beardon-Hockman-Short,BEJ,Kauffamn-Lambropoulou,Short-Stanier}), number theory (e.g.\ \cite{Tong1992,Tong1994,Williams-Buhr}) and dynamical systems (e.g.\ \cite{Tanaka-Ito,Williams1979,Jager1982,GMR}). 
Of course, the unique RPCF expansion of an irrational number $x$ is a PICF expansion. 
However, there are other PICF expansions of $x$ in general. 
For example, we obtain 
\begin{align*}
\sqrt{2} = [1, \overline{2}] = [-1,\overline{1,-2,1}]. 
\end{align*}
Hence it is natural to consider the following question. 

\begin{question}\label{question1}
For each non-negative integer $N$ and positive integer $l$, can we determine all $(N, l)$-type PICF expansions of an irrational number?
\end{question} 

In this paper, we give partial answers to this question as follows. 
We assume that $N=1$ since we also consider an application to the solutions of the Pell equations.

Set
\begin{align*}
&m_1(t):=t^2+1, \\
&m_{2}(s,t):=s^2t^2+t, \\
&m'_{2}(s,t):=s^2t^2+2t, \\
&m_3(s,t):=16t^2s^4 + 8ts^3 + (8t^2 + 1)s^2 + 6ts + t^2 + 1.
\end{align*}
Then, for each $l=1,2,3$, we give a necessary and sufficient condition for existing PICF expansions of square roots of positive square-free integers.
\begin{theorem}\label{iffcondition}
Let $m$ be a positive integer and $l$ is $1,2$ or $3$.
Then $\sqrt{m}$ has a $(1,l)$-type PICF expansion if and only if $m$ is $m_l(s, t)$ or $m'_l(s, t)$ for some $s, t\in \Z$.
\end{theorem}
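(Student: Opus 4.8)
The plan is to convert the existence of a $(1,l)$-type PICF expansion $\sqrt{m}=[a_0,\overline{a_1,\dots,a_l}]$ into a system of polynomial equations in $a_0,\dots,a_l,m$, cutting out the PCF variety of \cite{BEJ}, and then to determine its integer points. Writing $\begin{pmatrix} p & r \\ q & s\end{pmatrix}=M(a_1)\cdots M(a_l)$ with $M(a)=\begin{pmatrix} a & 1 \\ 1 & 0\end{pmatrix}$, one has $\det=(-1)^l$. If $\eta:=[\overline{a_1,\dots,a_l}]$ denotes the purely periodic tail, then $\eta=\frac{p\eta+r}{q\eta+s}$, so $\eta$ and its conjugate $\bar\eta$ are exactly the two roots of $qx^2+(s-p)x-r=0$, giving $\eta+\bar\eta=(p-s)/q$ and $\eta\bar\eta=-r/q$. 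On the other hand $\eta=1/(\sqrt m-a_0)$ forces $\eta+\bar\eta=2a_0/d$ and $\eta\bar\eta=-1/d$, where $d:=m-a_0^2\ne 0$. Comparing these, I obtain the two clean defining equations $q=rd$ and $p-s=2a_0r$. Carrying out this reduction is the first step, since it replaces the analytic problem by a purely Diophantine one over $\Z$.

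For $l=1,2$ the resulting equations can be solved by hand. When $l=1$ one has $(p,q,r,s)=(a_1,1,1,0)$, so $q=rd$ gives $d=1$, i.e. $m=a_0^2+1=m_1(a_0)$, and $p-s=2a_0r$ gives $a_1=2a_0$. When $l=2$ one has $(p,q,r,s)=(a_1a_2+1,a_2,a_1,1)$, so the two equations become $a_2=a_1d$ and $a_2=2a_0$, whence $a_1d=2a_0$ and $a_0=a_1d/2$. Solving this over $\Z$ splits according to the parity of the period term $a_1$: if $a_1$ is even then $m=s^2t^2+t=m_2(s,t)$, and if $a_1$ is odd (which forces $d$ even) then $m=s^2t^2+2t=m'_2(s,t)$. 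This parity dichotomy is exactly what produces the two families $m_2$ and $m'_2$.

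For $l=3$ the entries are $(p,q,r,s)=(a_1a_2a_3+a_1+a_3,\ a_2a_3+1,\ a_1a_2+1,\ a_2)$, and the defining equations $a_2a_3+1=(a_1a_2+1)d$ and $a_1a_2a_3+a_1+a_3-a_2=2a_0(a_1a_2+1)$ form a genuinely nonlinear Diophantine system. Classifying all of its integer points is the main obstacle, and it is here that I would invoke the structural description of the PCF variety in \cite{BEJ}: after eliminating $a_3$ between the two relations and clearing denominators, I expect the problem to reduce to the integer points of a single explicit equation whose parametrization yields the polynomial $m_3(s,t)$, which is quartic in $s$. The absence of a second family $m'_3$ should reflect that, unlike the $l=2$ case, the parity branching collapses to a single orbit of solutions here.

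All of the above establishes the \emph{only if} direction, since an expansion forces $m$ into the listed shape. For the \emph{if} direction I would, for each family, write down the explicit partial quotients $a_0,\dots,a_l$ supplied by the parametrization, check directly that they satisfy $q=rd$ and $p-s=2a_0r$, and then verify that the resulting periodic integer continued fraction genuinely converges to the positive root $+\sqrt m$ rather than to its conjugate. This convergence and sign check is the one nontrivial point in the converse, because convergence of integer continued fractions is not automatic as it is for regular ones; I expect to settle it via the reducedness-type conditions $\eta>1$ and $-1<\bar\eta<0$ (or their PICF analogue), which single out the correct root of the quadratic.
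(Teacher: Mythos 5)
Your reduction to Diophantine equations is correct and is in fact the same as the paper's: the two relations $q=rd$ and $p-s=2a_0r$ (with $d=m-a_0^2$) are, after rearrangement, exactly the defining equations of the PCF variety $V(\sqrt{m})_{1,l}$ written down in \S\ref{Integerpoints}, and your solutions for $l=1,2$ reproduce \cref{11iffcondition,12iffcondition}, including the parity split producing $m_2$ versus $m'_2$. The genuine gap is $l=3$, which is the heart of the theorem and which you do not actually carry out. There is no ``structural description'' in \cite{BEJ} to invoke: that paper defines the varieties but does not classify their integer points --- that classification is precisely this paper's contribution (\cref{13iffcondition}) --- and eliminating $a_3$ does not leave a single parametrizable equation. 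What the paper actually does is extract from your first relation the identity $\abs{(2a_0-a_3)(a_1a_2+1)}=\abs{a_1-a_2}$ and run an inequality-driven case analysis comparing $\abs{a_1-a_2}$ with $\abs{a_1a_2+1}$: the branch $2a_0=a_3$, $a_1=a_2$ leads, via the congruence $4sa_0\equiv-1 \bmod{4s^2+1}$, to $m=m_3(s,t)$, but there are also branches with $a_1\neq a_2$ (for instance $(a_1,a_2)=(1,-2),(-2,3),\dots$), yielding the families $m_3(0,t)=t^2+1$ and $m_3(\pm1,t)=25t^2+14t+2$ and the new expansions \cref{newPICFexpansion1,newPICFexpansion2}. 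So your heuristic that ``the parity branching collapses to a single orbit'' is wrong in spirit: there are several distinct families of non-degenerate integer points (\cref{13effective}); it merely happens that their $m$-values are all captured by $m_3(s,t)$ once $s=0,\pm1$ are allowed.

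A second gap is the if direction. You rightly flag convergence as the nontrivial point, but the tool you propose --- the reducedness conditions $\eta>1$, $-1<\bar{\eta}<0$ --- is a theorem about regular continued fractions and does not decide convergence of a prescribed integer continued fraction; worse, it fails for exactly the expansions this theorem needs. For example, $\sqrt{2}=[-1,\overline{1,-2,1}]$ is a valid $(1,3)$-type expansion with tail $1/(\sqrt{2}+1)=\sqrt{2}-1\in(0,1)$, which is not reduced, and for $m=m_3(0,t)$ or $m_3(\pm1,t)$ the sporadic expansions (whose tails violate reducedness) are the \emph{only} $(1,3)$-type expansions available by \cref{13effective}, so a reducedness argument cannot certify the if direction for these families. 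The paper instead verifies, family by family in \cref{appendix}, the matrix criterion of \cite[Theorem 4.3]{BEJ} ($E(P)\neq\pm I$, conditions on the $(2,1)$-entries of all cyclic shifts, and a trace condition), which both gives convergence and pins down the sign of the limit; some such criterion, not classical reducedness, is what your converse direction requires.
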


We also obtain all $(1,l)$-type PICF expansions of square roots of $m_l(s,t)$ for $l=1,2,3$.
\begin{theorem}\label{Maintheorem1}
For all non-zero integers $s$, $t$ except for $m_l(s,t)\le0$, we have
\begin{align}
&\sgn(t)\sqrt{m_1(t)}=[t,\overline{2t}]\label{type11},\\
&\sgn(st)\sqrt{m_{2}(s,t)}=[st, \overline{2s,2st}],\label{type12-1} \\
&\sgn(st)\sqrt{m'_{2}(s,t)}=[st, \overline{s, 2st}],\label{type12-2} \\
&\sgn(t)\sqrt{m_3(s,t)}=[s+(4s^2+1)t, \overline{2s,2s,2(s+(4s^2+1)t)}].\label{type13}
\end{align}
We further obtain
\begin{align}\label{newPICFexpansion1}
\sgn(t)\sqrt{m_3(0,t)} = [-2+ t, \overline{1,-2,-1+ 2t}] = [-1+ t, \overline{2,-1,1+ 2t}],
\end{align} 
\begin{align}\label{newPICFexpansion2}
\sgn(t)\sqrt{m_3(\pm1,t)} = [2+5t,\overline{-2,3,3+10t}] = [1+5t, \overline{3,-2,3+10t}]
\end{align}
for $t \in \Z\setminus\{0\}$ and $\sqrt{m_3(\pm 1,0)}=\sqrt{2}=[2,\overline{-2,3,3}]=[1,\overline{3,-2,3}]$. 

Moreover, these PICFs are all of $(1,1), (1,2)$ and $(1,3)$-type PICF expansions of square roots of positive square-free integers.
\end{theorem}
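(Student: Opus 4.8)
The plan is to separate the two assertions: that the displayed continued fractions really equal $\sgn(\cdot)\sqrt{m_l}$, and that the resulting list is complete. For the first, I would argue uniformly through the fixed point of the periodic tail. Writing $y := [\overline{a_1,\dots,a_l}]$, the matrix identity $\prod_{i=1}^{l}\left(\begin{smallmatrix}a_i&1\\1&0\end{smallmatrix}\right)=\left(\begin{smallmatrix}P&R\\Q&S\end{smallmatrix}\right)$ gives $y=(Py+R)/(Qy+S)$, hence the quadratic $Qy^2+(S-P)y-R=0$; setting $x:=a_0+1/y$ one checks $x^2=m_l$. For \eqref{type13} the period $(2s,2s,2a_0)$ with $a_0=s+(4s^2+1)t$ produces, after cancelling the factor $4s^2+1$, the clean relation $(1+4st)y^2-2a_0y-1=0$, whose discriminant is $4(a_0^2+1+4st)$, and the algebraic identity $a_0^2+1+4st=m_3(s,t)$ forces $x=\pm\sqrt{m_3(s,t)}$; the expansions \eqref{type11}--\eqref{type12-2} and \eqref{newPICFexpansion1}--\eqref{newPICFexpansion2} follow by the same computation. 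The one genuine subtlety here is the choice of root: an integer continued fraction may fail to converge, or may converge to the conjugate root, so I would fix the sign of $y$ --- equivalently the prefactor $\sgn(t)$ or $\sgn(st)$ --- from the convergence behaviour of the tail under the standing hypothesis $m_l>0$.

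For completeness I would pass to the PCF varieties $V_l$ of \cite{BEJ}. A $(1,l)$-type PICF expansion of $\sqrt m$ is precisely an integer point $(a_0,\dots,a_l)$ on $V_l$ whose associated $m$ is positive and whose tail converges, and \cref{iffcondition} already pins the square-free values of $m$ down to $m_l(s,t)$ and $m'_2(s,t)$. The task therefore reduces to enumerating all integer points of $V_l$ lying over square-free positive $m$ and reading off the coefficient tuples. Geometrically $V_1$ is a curve while $V_2$ and $V_3$ are surfaces; for $l=1$ the integer points form the single family $(a_0,a_1)=(t,2t)$, giving \eqref{type11}, and for $l=2$ they split into the two rational families $(st,2s,2st)$ and $(st,s,2st)$, giving \eqref{type12-1}--\eqref{type12-2}.

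The hard part is $l=3$, where $V_3$ is a surface carrying the two-parameter family $(a_0,2s,2s,2a_0)$ of \eqref{type13} together with possible sporadic points off it. These sporadic points are unavoidable at $s=0$ and $s=\pm1$: at $s=0$ the period of \eqref{type13} degenerates (its interior zeros shorten the continued fraction, and indeed $m_3(0,t)=m_1(t)=t^2+1$), so the genuine $(1,3)$-expansions there are those of \eqref{newPICFexpansion1}; while for $m_3(\pm1,t)=25t^2\pm14t+2$ the two further expansions in \eqref{newPICFexpansion2} appear alongside the generic one. I would therefore run a finite case analysis on $V_3$: parametrize its integer points, impose square-freeness of $m$ to discard the higher-multiplicity (non-square-free) solutions, and verify by the matrix computation of the first paragraph that the survivors are exactly \eqref{type13}--\eqref{newPICFexpansion2}, with the base values $m_3(\pm1,0)=2$ handled directly. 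The main difficulty --- and the precise point where the integer-point analysis of \cite{BEJ} is indispensable --- is proving that this enumeration is exhaustive, that is, that no additional sporadic integer points occur over square-free $m$ at any other value of $s$.
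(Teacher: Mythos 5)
Your overall architecture coincides with the paper's: verify the displayed identities by a matrix computation on the periodic tail, and prove the completeness claim by determining all non-degenerate integer points on the PCF varieties; your algebra for \eqref{type13} (the cancellation of $4s^2+1$ and the identity $a_0^2+1+4st=m_3(s,t)$) is correct. The first genuine gap is convergence and the choice of sign. The fixed-point argument shows only that \emph{if} the tail $y$ converges, then $a_0+1/y\in\{\pm\sqrt{m_l}\}$; a PICF with negative partial quotients can genuinely diverge, and nothing in the quadratic selects the root. Your plan to ``fix the sign from the convergence behaviour of the tail'' names the problem rather than solving it. The paper settles both points through \cite[Theorem 4.3]{BEJ}: it verifies the three explicit convergence conditions for every tuple appearing in the theorem (this occupies all of \cref{appendix}), and then evaluates each limit, sign included, by the formula $[b_1,\overline{a_1,\dots,a_l}]=(\lambda(P)_+-E(P)_{22})/E(P)_{21}$, where $\lambda(P)_+$ is the eigenvalue of $E(P)$ of absolute value at least $1$; the prefactors $\sgn(t)$ and $\sgn(st)$ are read off from the explicit expressions for $\lambda(P)_+$, not from a soft convergence consideration.

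The second, more serious gap is the completeness claim for $l=3$. You identify exhaustiveness of the enumeration of integer points on $V(\sqrt{m})_{1,3}$ as the main difficulty and state that it is ``the precise point where the integer-point analysis of \cite{BEJ} is indispensable'' --- that is, you plan to quote it. But \cite{BEJ} contains no such analysis: it supplies only the definition of the varieties, the correspondence between expansions and integer points (\cref{necessarycondition}), and the convergence criterion. The exhaustive determination of $V(\sqrt{m})_{1,3}(\Z)_{\text{nd}}$ is this paper's own contribution (\cref{13iffcondition} and \cref{13effective}): one rewrites the first defining equation as $\abs{(2y_1-x_3)(x_1x_2+1)}=\abs{x_1-x_2}$ and runs a size-comparison case analysis ($\abs{x_1-x_2}>\abs{x_1x_2+1}$ versus $\abs{x_1-x_2}\leq\abs{x_1x_2+1}$, with subcases $\abs{x_1}=1$, $\abs{x_1}\geq 2$, $\abs{2y_1-x_3}=1$, and $2y_1=x_3$), which is exactly what produces the generic family together with the sporadic families at $s=0$ and $s=\pm 1$. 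Without supplying this argument or an equivalent one, the ``these are all'' statement remains unproven. Two smaller slips: for fixed $m$, $V(\sqrt{m})_{1,1}$ is zero-dimensional (two points) and $V(\sqrt{m})_{1,2}$ is a curve, not as you describe; and square-freeness of $m$ is a standing hypothesis of the theorem, not a sieve used during the enumeration.
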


Note that (\ref{type11}), (\ref{type12-1}), (\ref{type12-2}) and (\ref{type13}) are immediately obtained from classical results.
Indeed, we obtain them from \cite[{p125, l.13}]{Jacobson-Williams} if $s$ and $t$ are both positive integers.
In addition, we also obtain them by using a convergent algorithm (e.g. \cite[Theorem 4.3]{BEJ}) even if $s$ and $t$ are not necessarily positive integers.   
However, the second part of \cref{Maintheorem1} is not obtained from the above method and the authors could not find any explicit references which contain (\ref{newPICFexpansion1}) and (\ref{newPICFexpansion2}).

To prove \cref{iffcondition,Maintheorem1}, 
we determine all integer points on Periodic Continued Fraction varieties (PCF varieties for short) in \cite{BEJ}. 
Here, a PCF variety is an algebraic variety such that some integer points correspond to PICF expansions of a quadratic irrational number.
We explain the definition of PCF varieties in \S\ref{PCFvariety}.

As an application of \cref{Maintheorem1}, 
we obtain fundamental solutions of some families of Pell equations from PICF expansions of square roots of positive square-free integers.
In general, all integer solutions of the Pell equation $x^2-my^2=\pm 1$ is generated by a solution, called a fundamental solution.
Hence, we are interested in the fundamental solutions of the Pell equations. 
It is well-known that for a non-square positive integer $m$, there is an algorithm of obtaining a fundamental solution of the Pell equation $x^2-my^2=\pm 1$ from the RPCF expansion of $\sqrt{m}$.
On the other hand, the algorithm does not work when we consider PICFs of $\sqrt{m}$ in general. 
Indeed we obtain a solution $(x, y)=(-7, -5)$ of the Pell equation from $\sqrt{m_3(\pm 1, 0)} = \sqrt{2} = [2, \overline{-2, 3, 3}]$ and this is not a fundamental solution $(x,y)=(1,1)$. 
Hence we can consider the following question.

\begin{question}\label{question2}
For every positive integer $m$, when do we obtain a fundamental solution\footnote{In general, a fundamental solution of the Pell equation $(x,y)$ requires that both $x$ and $y$ are positive. 
However, in this paper, we allow that $x, y$ are not necessarily positive. 
For details, see \S\ref{pellequation}.} of the Pell equation $x^2-my^2=\pm 1$ from any $(1,l)$-type PICF expansions of $\sqrt{m}$ for each $l \in \Z_{\geq 2}$ as well as the RPCF expansion of $\sqrt{m}$?
\end{question}

In other words, 
this question asks whether a fundamental solution of the Pell equation $x^2-my^2=\pm 1$ is obtained from the $(l-1)$th convergent of a PICF expansion of $\sqrt{m}$ for each $l \in \Z_{\geq 2}$.
Here, the $n$th convergent of a continued fraction $ [c_1, c_2, \dots]$ is $[c_1,c_2,\dots, c_n]$.

If $l=1$, the answer of this question is obtained from a classical result.
Indeed, it is a classical result that $(x,y)=(t,1)$ is a fundamental solution of the Pell equation $x^2-m_1(t)y^2=\pm 1$ for every non-zero integer $t$ and we can obtain it from all PICF expansions of $\sqrt{m_1(t)}$ given in \cref{Maintheorem1}. 
In this paper, we answer this question completely for $l=2,3$.  
\begin{theorem}\label{Maintheorem2-2}
\begin{enumerate}
    \item Suppose that $s,t$ are non-zero integers with $m_2(s,t)> 0$. 
    Then, for each $s,t$, a fundamental solution of the Pell equation $x^2-m_2(s,t)y^2=\pm 1$ is NOT obtained from the $1$st convergent of the PICF expansion of $\sqrt{m_2(s,t)}$ given in \cref{Maintheorem1} if and only if $\abs{s}\geq 2$ and $t=-1$.
    \item Suppose that $s,t$ are non-zero integers with $m_3(s,t)> 0$ (resp.\ $m'_2(s,t)>0$).
    Then, for each $s,t$, a fundamental solution of the Pell equation $x^2-m_3(s,t)y^2=\pm 1$ (resp.\ $x^2-m'_2(s,t)y^2=\pm 1$) is obtained from the $2$nd (resp.\ $1$st) convergent of the PICF expansion of $\sqrt{m_3(s,t)}$ (resp.\ $\sqrt{m'_2(s,t)}$) given in \cref{Maintheorem1}.
\end{enumerate}
\end{theorem}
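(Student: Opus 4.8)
The plan is to reduce the whole statement to a question about powers of the fundamental unit and then to a handful of explicit polynomial inequalities in $s$ and $t$.

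First I would make the relevant convergents explicit. Writing $\sqrt{m}=[a_0,\overline{a_1,\dots,a_l}]$ for the expansions in \cref{Maintheorem1} and letting $p_{l-1}/q_{l-1}=[a_0,\dots,a_{l-1}]$ denote the convergent in question, the recurrences $p_k=a_kp_{k-1}+p_{k-2}$ and $q_k=a_kq_{k-1}+q_{k-2}$ give $(p_1,q_1)=(s^2t+1,\,s)$ for $m'_2$, $(p_1,q_1)=(2s^2t+1,\,2s)$ for $m_2$, and $(p_2,q_2)=\bigl(4s^3+3s+(4s^2+1)^2t,\ 4s^2+1\bigr)$ for $m_3$; in each case one verifies the polynomial identity $p_{l-1}^2-m\,q_{l-1}^2=(-1)^l$, so the convergent always produces a solution of the Pell equation. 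Since the convention of \S\ref{pellequation} allows solutions of either sign, the assertion ``a fundamental solution is obtained from the convergent'' is equivalent to saying that the unit $\abs{p_{l-1}}+\abs{q_{l-1}}\sqrt m$ is the fundamental unit $\varepsilon_0$ itself, rather than a proper power $\varepsilon_0^{k}$ with $k\ge 2$. (For $s=\pm1$ the alternative expansions \eqref{newPICFexpansion2} give the same second convergent, and $m_3(-1,t)=m_3(1,-t)$, so the claim is unambiguous.)

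The key step is a size criterion detecting $k=1$. Let $\varepsilon_0=x_0+y_0\sqrt m$ with $x_0,y_0\ge 1$ be the fundamental unit; every nontrivial positive solution equals $\varepsilon_0^{k}$ for some $k\ge 1$, and writing $\varepsilon_0^{k}=X_k+Y_k\sqrt m$ one has $Y_{k+1}=x_0Y_k+y_0X_k>Y_k$, so the $Y_k$ strictly increase. Hence for $k\ge 2$ we get $Y_k\ge Y_2=2x_0y_0\ge 2\sqrt{m-1}$, using $x_0^2=m y_0^2\mp 1\ge m-1$. Consequently, if $q_{l-1}^2<4(m-1)$ then necessarily $k=1$ and the convergent yields a fundamental solution. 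Substituting the explicit data turns this into the inequalities $s^2(4t^2-1)+4(2t-1)>0$ for $m'_2$, $(t-1)\bigl(s^2(t+1)+1\bigr)>0$ for $m_2$, and $(64t^2-16)s^4+32ts^3+(32t^2-4)s^2+24ts+4t^2-1>0$ for $m_3$, which I would check to hold for all admissible $s,t$ (with $t\neq 0$ for $m_3$, which is exactly what rules out the spurious solution $(7,5)$ coming from $\sqrt{m_3(\pm1,0)}=\sqrt2$). The finitely many boundary configurations with $q_{l-1}^2=4(m-1)$, namely $(s,t)=(\pm1,-2)$ for $m_2$ and $(s,t)=(\pm2,-1)$ for $m'_2$, would be checked by hand and seen to remain fundamental.

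Finally, for the exceptional family I would exhibit the obstruction directly. When $t=-1$ and $\abs{s}\ge 2$ we have $m_2(s,-1)=s^2-1$, and $(\abs{s},1)$ solves $x^2-(s^2-1)y^2=1$ with the least possible $y=1$, so $\varepsilon_0=\abs{s}+\sqrt{s^2-1}$; squaring gives $\varepsilon_0^{2}=(2s^2-1)+2\abs{s}\sqrt{s^2-1}$, which is precisely $\abs{p_1}+\abs{q_1}\sqrt{m_2}$ for $(p_1,q_1)=(1-2s^2,2s)$. Thus $k=2$ and no fundamental solution is obtained, whereas at $t=1$ the block $2s,2st$ collapses and $m_2(s,1)=s^2+1=m_1(s)$, so this case is governed by the $(1,1)$-expansion and the fundamental solution $(s,1)$ is recovered. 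I expect the main difficulty to be the third step: verifying the three polynomial inequalities uniformly across all sign combinations of $s$ and $t$ and correctly isolating the boundary and degenerate cases, precisely because the fundamental unit is \emph{not} described by one uniform regular expansion as the signs of $s,t$ vary; the clean unit-power criterion above is exactly what lets us avoid a case-by-case determination of the regular continued fraction of $\sqrt m$.
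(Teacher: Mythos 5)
Your core strategy is correct and genuinely different from the paper's proof: the paper computes auxiliary RPCF expansions of $\sqrt{m_2(s,t)}$, $\sqrt{m'_2(s,t)}$, $\sqrt{m_3(s,t)}$ for negative parameters (\cref{npcf}), applies \cref{fundamentalsolution}, and matches those convergents against the PICF convergents case by case, whereas you verify the Pell identity for the explicit convergents and then use the size criterion ``$q_{l-1}^2<4(m-1)$ forces the unit exponent to be $1$,'' which treats all sign patterns of $(s,t)$ uniformly and avoids \cref{npcf} entirely. The criterion itself is valid, your convergent formulas are right, the three inequalities are the correct translations, the $m_3$ inequality does hold for all nonzero $s,t$, and your analysis of the family $t=-1$, $\abs{s}\ge2$ is correct; so part (2) for $m_3$ and the bulk of the remaining cases genuinely follow from your argument.

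The gap is your disposal of the cases where the inequality is not strict, and it cannot be repaired. Equality $q_{l-1}^2=4(m-1)$ forces the convergent solution to be exactly $\varepsilon_0^2$: it cannot be $\varepsilon_0$ itself, since then $y_0=2\sqrt{m-1}$ would make $m-1=u^2$ a perfect square and $(u,1)$ a positive solution with smaller $y$-part, a contradiction; hence $k\ge2$, and the chain $q_{l-1}=Y_k\ge Y_2=2x_0y_0\ge 2\sqrt{m-1}=q_{l-1}$ collapses to $k=2$, $y_0=1$, $x_0=\sqrt{m-1}$. So the boundary configurations can never be ``checked by hand and seen to remain fundamental'': at $(s,t)=(\pm1,-2)$ for $m_2$ and $(\pm2,-1)$ for $m'_2$ one has $m=2$ and the first convergent yields $(3,2)$, i.e.\ $(1+\sqrt2)^2$, which is not fundamental under the paper's definition. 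The same degeneration occurs along the entire line $t=1$ for $m_2$, which your own inequality $(t-1)(s^2(t+1)+1)>0$ detects (it vanishes identically there) but which you then wave off incorrectly: the fundamental solution $(s,1)$ of $x^2-(s^2+1)y^2=\pm1$ is the $0$th convergent of $[s,\overline{2s,2s}]$, whereas the $1$st convergent --- the object the theorem is about --- is $(2s^2+1,2s)$, i.e.\ $(\abs{s}+\sqrt{s^2+1})^2$. In other words, your criterion is sharp enough to expose that the statement itself fails at these parameters (part (1) at $t=1$ and $(s,t)=(\pm1,-2)$; part (2) for $m'_2$ at $(s,t)=(\pm2,-1)$), so no argument can close these cases; the paper's own proof slips at exactly the same points, by applying \cref{fundamentalsolution} to expansions such as $[s,\overline{2s,2s}]$ and $[1,\overline{2,2}]$ whose minimal period is $1$ rather than $2$. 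A correct write-up of your approach should prove the assertion away from these degenerate parameters---which your inequalities do---and record them explicitly as exceptions, rather than asserting they are unproblematic.
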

Note that for each $s,t$, we can obtain a fundamental solution of the Pell equation $x^2-m_2(s,t)y^2=\pm 1$ from the $0$th convergent of the PICF expansion of $\sqrt{m_2(s,t)}$ if $\abs{s}\geq 2$ and $t=-1$. 

As a by-product of the proof of \cref{Maintheorem2-2}, 
we also find fundamental solutions of some families of the Pell equations.

\begin{corollary}\label{Maintheorem2}
Let $s,t$ be non-zero integers.
\begin{enumerate}
\item If $s,t$ satisfy $m_2(s,t)>0$ and $t\neq -1$, 
then 
\begin{align*}
(x_{2}(s,t), y_{2}(s,t)):=(2s^2t+1, 2s)   
\end{align*}
is a fundamental solution of the Pell equation $x^2-m_2(s,t)y^2=\pm 1$.

\item If $s,t$ satisfy $m'_2(s,t)>0$, 
then 
\begin{align*}
(x'_{2}(s,t), y'_{2}(s,t)):= (s^2t+1, s)   
\end{align*}
is a fundamental solution of the Pell equation $x^2-m'_2(s,t)y^2=\pm 1$.

\item 
$(x_{3}(s,t), y_{3}(s,t)):=(16ts^4 + 4s^3 + 8ts^2 + 3s + t, 4s^2+1)$
is a fundamental solution of the Pell equation $x^2-m_3(s,t)y^2=\pm 1$.
\end{enumerate}
Moreover, $(x_3(s,t), y_3(s,t))$ is also a fundamental solution of the Pell equation $x^2-m_3(s,t)y^2=\pm 1$ for each pair of integers $(s,t)$ except for $(s,t)=(\pm1,0), (0,0)$. 
\end{corollary}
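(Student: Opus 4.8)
The plan is to recognise each claimed solution as a convergent of the corresponding PICF expansion from \cref{Maintheorem1}, and then to upgrade the statement ``it solves the Pell equation'' to ``it is a fundamental solution'' by quoting \cref{Maintheorem2-2}. First I would compute the relevant convergents. Writing $p_n/q_n$ for the $n$th convergent and using $p_n=a_np_{n-1}+p_{n-2}$, $q_n=a_nq_{n-1}+q_{n-2}$, the expansion $\sgn(st)\sqrt{m_2(s,t)}=[st,\overline{2s,2st}]$ gives $(p_1,q_1)=(2s^2t+1,2s)$, the expansion $\sgn(st)\sqrt{m'_2(s,t)}=[st,\overline{s,2st}]$ gives $(p_1,q_1)=(s^2t+1,s)$, and, setting $a_0=s+(4s^2+1)t$, the period-$3$ expansion gives $q_2=4s^2+1$ and $p_2=(4s^2+1)a_0+2s=16ts^4+4s^3+8ts^2+3s+t$. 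Hence $(x_2,y_2)$, $(x'_2,y'_2)$ and $(x_3,y_3)$ are respectively the first, first and second convergents. A direct substitution (or the general relation between the convergents of a periodic expansion and its associated quadratic form) then confirms $x^2-my^2=\pm1$ in each case.

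With the solutions identified as convergents, parts (1)--(3) follow from \cref{Maintheorem2-2}. Its part (2) asserts that a fundamental solution is obtained from the first convergent of $\sqrt{m'_2(s,t)}$ and from the second convergent of $\sqrt{m_3(s,t)}$, which gives parts (2) and (3). Its part (1) asserts that the first convergent of $\sqrt{m_2(s,t)}$ fails to be fundamental exactly when $\abs{s}\geq2$ and $t=-1$; since part (1) of the corollary assumes $t\neq-1$, this exceptional case never occurs and $(2s^2t+1,2s)$ is fundamental.

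It remains to treat the final ``Moreover'' clause, which lies outside the scope of \cref{Maintheorem2-2} because the latter requires $s,t\neq0$. Here I would argue by degeneration to the family $m_1$. When $s=0$ one has $m_3(0,t)=t^2+1=m_1(t)$ and $(x_3(0,t),y_3(0,t))=(t,1)$, the classical fundamental solution recalled in the introduction, which settles the case $s=0,\ t\neq0$. When $t=0$ one has $m_3(s,0)=s^2+1=m_1(s)$ and $(x_3(s,0),y_3(s,0))=(4s^3+3s,4s^2+1)$, so the task reduces to deciding, against the notion of fundamental solution fixed in \S\ref{pellequation}, for which $s$ this pair is fundamental, the claim being that it is so for every $\abs{s}\geq2$ while it must be discarded for $s=\pm1$ (where $m_3(\pm1,0)=2$) and for $s=0$ (where $m=1$ is a square). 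I expect this $t=0$ verification to be the main obstacle: the expansion is only genuinely of period three when $t\neq0$, so at $t=0$ the second convergent one reads off is a higher power of the fundamental unit rather than the unit itself, and the whole point is delicate precisely because it hinges on the convention of \S\ref{pellequation} and on separating the admissible range $\abs{s}\geq2$ from the degenerate value $m=2$ that is excluded.
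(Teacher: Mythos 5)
Your treatment of parts (1)--(3) is correct and is essentially the paper's own route: the paper proves this corollary by the single remark that it follows ``by writing down the convergents explicitly in the proof of \cref{Maintheorem2-2}'', and your convergent computations $(p_1,q_1)=(2s^2t+1,2s)$, $(p_1,q_1)=(s^2t+1,s)$ and $(p_2,q_2)=(16ts^4+4s^3+8ts^2+3s+t,4s^2+1)$ are exactly the intended ones. The case $s=0$, $t\neq 0$ of the final clause is also fine and matches what the paper does inside the proof of \cref{Maintheorem2-2}: the second convergent there is $(-t)/(-1)$, i.e.\ $(t,1)$ up to sign, the classical fundamental solution for $m_1(t)=t^2+1$.

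The genuine gap is the case $t=0$, $\abs{s}\geq 2$, which you explicitly leave open (``the main obstacle''), and it cannot be closed: your own observation that at $t=0$ the second convergent is a higher power of the fundamental unit refutes the claim rather than being a delicacy to work around. Concretely, $(s+\sqrt{s^2+1})^3=4s^3+3s+(4s^2+1)\sqrt{s^2+1}$, so $(x_3(s,0),y_3(s,0))=(4s^3+3s,4s^2+1)$ is the cube of the unit $s+\sqrt{s^2+1}$, and $(s,1)$ already lies in $W$ because $s^2-(s^2+1)=-1$. Hence, under the definition of \S\ref{pellequation}, the pair $(x_3(s,0),y_3(s,0))$ corresponds to an element of the form $(\ast,\pm 3)\in\Z/2\Z\oplus\Z$ and is therefore not fundamental, for \emph{every} $s\neq 0$, not only for $s=\pm 1$. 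For instance $(s,t)=(2,0)$ gives $m_3=5$, whose fundamental solution is $(2,1)$, while $(x_3,y_3)=(38,17)$ and $38+17\sqrt{5}=(2+\sqrt{5})^3$. So the exceptional set in the ``Moreover'' clause would have to contain all pairs $(s,0)$ with $s\neq 0$; the statement as printed is wrong in that range, and no proof attempt can repair it. Note that the paper's own proof never treats this range either: the proof of \cref{Maintheorem2-2} that it appeals to assumes $s,t\neq 0$ (apart from the families $m_3(0,t)$ and $m_3(\pm 1,t)$), and \cref{Maintheorem1} only asserts the period-three expansion for $s,t$ both non-zero, which is consistent with your correct diagnosis that the expansion degenerates at $t=0$. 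The right conclusion from your observation is a counterexample, not a pending verification.
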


Of course, 
\cref{Maintheorem2} are classical results when $s, t$ are positive integers.
Indeed, we obtain them by applying the algorithm of obtaining fundamental solutions of the Pell equations to the first part of \cref{Maintheorem1}.
In this theorem, we claim that these results also hold when $s, t$ are not necessarily positive.

We also note that \cref{Maintheorem2-2} can be regarded as the results for fundamental solutions of some families of the Pell equations parametrized by non-zero integers $s,t$.
There are many related results about fundamental solutions of them  (cf. \cite{McLaughlin2003-2, Nathanson, Mollin1997, Mollin2001, Mollin2003, Mollin-Goddard, Ramasamy1994}). 
However, these previous results do not seem to cover our results even if fundamental solutions look like our results since the range of parameter values is different.

The contents of this article is as follows. 
In \S\ref{Preliminaries}, we recall fundamental solutions of the Pell equations and PCF varieties. 
In \S\ref{Integerpoints}, we determine the set of integer points on some PCF varieties. 
This is a key ingredient when we prove \cref{Maintheorem1}. 
In \S\ref{proof}, we prove \cref{iffcondition,Maintheorem1,Maintheorem2,Maintheorem2-2}. 
In \S \ref{Z2extension}, 
instead of $\sqrt{m}$,
we consider PCF expansions of certain algebraic integers related to the $\Z_2$-extension over $\Q$.
Moreover, we also discuss the relationship between such PCFs and the generalized Pell equations.


\section{Preliminaries}\label{Preliminaries}
In this section, 
we recall fundamental solutions of the Pell equations and PCF varieties. 
We will use them in the proof of \cref{iffcondition,Maintheorem1,Maintheorem2}.

\subsection{Pell equation}\label{pellequation}

In this subsection, 
we recall the classical algorithm for finding a fundamental solution of the Pell equation.
Through this section, let $m$ be a positive non-square integer.
We consider all integer solutions of the Pell equation $x^2-my^2=\pm 1$. 
Set
\begin{align*}
W:=\{(u,v)\in \Z^2 \mid \text{$u^2-mv^2=1$ or $u^2-mv^2=-1$}\}.   
\end{align*}
There is a natural bijection $(u,v)\mapsto u+v\sqrt{m}$ between $W$ and the group of units of the ring $\Z[\sqrt{m}]$. 
Moreover the group of units is isomorphic to $\Z/2\Z\oplus \Z$ by Dirichlet's unit theorem.
Under these bijections, we define a fundamental solution of the Pell equation $x^2-my^2=\pm 1$.
\begin{definition}
We call $(u,v)\in W$ fundamental if $(u,v)$ corresponds to one of $(1,1), (1,-1), (0,1), (0,-1) \in \Z/2\Z\oplus\Z$. 
\end{definition}
Note that an isomorphism between the group of units $\Z[\sqrt{m}]$ and $\Z/2\Z\oplus \Z$ is not canonical. 
However, for any isomorphism, the corresponding elements to $\{(1, \pm1), (0, \pm1)\}$ are the same. 
Thus the fundamental solutions are well-defined.

To find a fundamental solution of the Pell equation $x^2-my^2=\pm 1$, 
we can use the RPCF expansion of $\sqrt{m}$.
To explain this explicitly, we recall some facts and definitions. 
First, it is well-known that $\sqrt{m}$ has the $(1,l)$-type RPCF expansion for some $l \in \Z_{\geq 1}$ (cf. \cite[Theorem 7.21]{NZM}). 
Hence we can write $\sqrt{m}=[a_0,\overline{a_1,\dots, a_l}]$ for some $a_0, a_1, \dots, a_l\in \Z_{\geq 1}$ where $l$ is the period.
Next, we define the convergent of a RPCF $[a_0,a_1,a_2,\dots]$.
\begin{definition}
We define the $n$-th convergent $p_n/q_n$ of the RPCF $[a_0,a_1,a_2,\dots]$ as  
\[
(p_n,q_n)=(a_np_{n-1}+p_{n-2},a_nq_{n-1}+q_{n-2})
\]
for each $n\geq 1$.
Here, $(p_{-1}, q_{-1}):=(1, 0)$ and $(p_0, q_0):=(a_0, 1)$. 
\end{definition}
Note that $p_n/q_n=[a_0,\dots,a_n]$ (cf. \cite[Theorem 7.4]{NZM}).

Under the above preparation, the following holds.
\begin{proposition}\label{fundamentalsolution}
A fundamental solution of the Pell equation $x^2-my^2=\pm1$ is given by
\[
(x,y)=(p_{l-1},q_{l-1}).
\]
where $l$ is the period of the RPCF expansion of $\sqrt{m}$.
\end{proposition}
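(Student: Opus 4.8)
The plan is to prove both that $(p_{l-1},q_{l-1})\in W$ and that it is \emph{fundamental} in the sense of the preceding definition, i.e.\ that $\varepsilon:=p_{l-1}+q_{l-1}\sqrt m$ is a fundamental unit of $\Z[\sqrt m]$. First I would recall the complete quotients $\alpha_n$ of $\sqrt m$, writing $\alpha_n=(\sqrt m+P_n)/Q_n$ with $P_n,Q_n\in\Z$, $Q_n\mid(m-P_n^2)$, produced by the usual recursion $P_{n+1}=a_nQ_n-P_n$ and $Q_{n+1}=(m-P_{n+1}^2)/Q_n$ starting from $(P_0,Q_0)=(0,1)$. Substituting $\sqrt m=(\alpha_n p_{n-1}+p_{n-2})/(\alpha_n q_{n-1}+q_{n-2})$ and comparing the rational and irrational parts yields the classical identity $p_{n-1}^2-mq_{n-1}^2=(-1)^nQ_n$ (cf.\ \cite{NZM}).

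The first main step is to show $Q_l=1$. Since $\sqrt m=[a_0,\overline{a_1,\dots,a_l}]$ has minimal period $l$, the complete quotient $\alpha_1=[\overline{a_1,\dots,a_l}]$ is purely periodic, and one has the standard fact that $Q_n=1$ holds exactly when $l\mid n$; in particular $Q_l=1$. Feeding this into the identity at $n=l-1$ gives $p_{l-1}^2-mq_{l-1}^2=(-1)^l=\pm1$, so $(p_{l-1},q_{l-1})$ indeed lies in $W$, and $\varepsilon=p_{l-1}+q_{l-1}\sqrt m$ is a unit of $\Z[\sqrt m]$ with $\varepsilon>1$, because $a_i\ge1$ for all $i$ forces $p_{l-1},q_{l-1}>0$.

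The heart of the argument is the reverse direction, namely that $\varepsilon$ is the \emph{smallest} unit exceeding $1$. I would first observe that any unit $u+v\sqrt m>1$ automatically has $u,v>0$: its conjugate $u-v\sqrt m=\pm(u+v\sqrt m)^{-1}$ lies in $(-1,1)$, so taking sum and difference gives $2u>0$ and $2v\sqrt m>0$. Next I show $u/v$ is a convergent of $\sqrt m$: from $u^2-mv^2=\pm1$ one computes $\abs{u/v-\sqrt m}=1/\big(v(u+v\sqrt m)\big)<1/(2v^2)$ (the bound $u+v\sqrt m>2v$ holds in both sign cases since $u\ge v$), whence $u/v=p_n/q_n$ for some $n$ by Legendre's best-approximation theorem, the fraction being in lowest terms because $\gcd(u,v)=1$. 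Combined with the identity above, a convergent $(p_n,q_n)$ solves the Pell equation exactly when $Q_{n+1}=1$, i.e.\ when $n\equiv l-1\pmod l$; hence the units $>1$ are precisely the numbers $p_{kl-1}+q_{kl-1}\sqrt m$ for $k\ge1$, and since $(p_n)$ is strictly increasing, $\varepsilon$ is the least of these.

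Finally, by Dirichlet's unit theorem the positive units $>1$ are exactly the powers $\eta,\eta^2,\dots$ of the fundamental unit $\eta$, again a strictly increasing sequence; matching the two increasing enumerations term by term forces $\varepsilon=\eta$. Thus $\varepsilon$ corresponds to the generator $(0,1)\in\Z/2\Z\oplus\Z$, so $(p_{l-1},q_{l-1})$ is fundamental. I expect the decisive step to be the best-approximation argument identifying every Pell solution with a convergent; by contrast the identity $p_{n-1}^2-mq_{n-1}^2=(-1)^nQ_n$ together with $Q_l=1$ is routine once the complete-quotient recursion is in place.
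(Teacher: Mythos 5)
Your proof is correct, and it is essentially the argument the paper itself relies on: the paper offers no proof of its own but simply cites \cite[Theorem 7.25]{NZM}, whose proof is exactly your chain of steps --- the identity $p_{n-1}^2-mq_{n-1}^2=(-1)^nQ_n$, the fact that $Q_n=1$ precisely when $l\mid n$, and the best-approximation argument showing every solution of $x^2-my^2=\pm1$ arises from a convergent. The one piece of added value in your write-up is the final paragraph, which explicitly translates ``smallest unit $>1$'' into the paper's definition of a fundamental solution via the isomorphism of the unit group with $\Z/2\Z\oplus\Z$, a translation the paper leaves implicit.
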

For the proof of \cref{fundamentalsolution}, see \cite[Theorem 7.25]{NZM}.

\subsection{PCF variety}\label{PCFvariety}
In this subsection,
we introduce the definition of PCF varieties over a number field $K$.
Note that we define only $(N,l)$-type PCF varieties of square roots of $x^2-\alpha=0$ for $\alpha\in K$ which are the needed ones in our paper. 
See \cite[Section 3]{BEJ} for the definition of general PCF varieties.

Before we introduce the definition of PCF varieties, 
we prepare for some notations. 
For $a \in \C$ define 
\begin{align*}
D(a):=\begin{bmatrix}
a & 1 \\
1 & 0 \\
\end{bmatrix}.
\end{align*}
For a finite RCF $[c_1, \dots, c_n]$,
define
\begin{align*}
M([c_1,\dots,c_n]) =
\begin{bmatrix}
M([c_1,\dots,c_n])_{11} & M([c_1,\dots,c_n])_{12} \\
M([c_1,\dots,c_n])_{12} & M([c_1,\dots,c_n])_{22} \\
\end{bmatrix}
:= D(c_1)D(c_2)\cdots D(c_n). 
\end{align*}
For $N\in\Z_{\geq 0}$ and $l\in\Z_{\geq 1}$, we set 
\begin{align*}
&E((y_1,\dots, y_N, x_1,\dots, x_l))\\
&=
\begin{bmatrix}
E((y_1,\dots,y_N,x_1,\dots,x_l))_{11} & E((y_1,\dots,y_N,x_1,\dots,x_l))_{12} \\
E((y_1,\dots,y_N,x_1,\dots,x_l))_{21} & E((y_1,\dots,y_N,x_1,\dots,x_l))_{22} \\
\end{bmatrix}\\
&:=
\begin{cases}
M([y_1,\dots,y_N,x_1,\dots,x_l,0,-y_N,\dots,-y_1,0]) & \text{if $N\geq 1$,}\\
M([x_1,\dots,x_l]) & \text{if $N=0$}. 
\end{cases}
\end{align*}
Note that we denote $E((x_1,\dots, x_l))$ when $N=0$.

\begin{definition}
Fix $N\in\Z_{\geq 0}$ and $l \in \Z_{\geq 1}$.
For $\alpha\in K$,
we define a PCF variety of $(N, l)$-type by the equations
\begin{align*}
\begin{cases}
E((y_1,\dots,y_N,x_1,\dots,x_l))_{22}-E((y_1,\dots,y_N,x_1,\dots,x_l))_{11}=0,\\
E((y_1,\dots,y_N,x_1,\dots,x_l))_{12}=-\alpha E((y_1,\dots,y_N,x_1,\dots,x_l))_{21},
\end{cases}
\end{align*}
where $y_1,\dots,y_N, x_1, \dots, x_l$ are variables.
We write this variety as $V(\alpha)_{N, l}$.
In what follows, $(y_1,\dots,y_N,x_1,\dots,x_l)$ denotes the coordinate of $V(\alpha)_{N, l}$.
\end{definition}

In what follows,
we consider $N=1$ and $\alpha=\sqrt{m}$ where $m$ is a positive square-free integer.
All $(1,l)$-type PICF expansions of $\sqrt{m}$ come from integer points on $V(\sqrt{m})_{1, l}$, 
that is, the following proposition holds.
\begin{proposition}{(\cite[(a) in Section 3.1]{BEJ})}\label{necessarycondition}
If $\sqrt{m}$ has a $(1,l)$-type PICF expansion $[b_1, \overline{a_1, \dots, a_l}]$, 
then $(b_1, a_1, \dots, a_l)\in \A^{1+l}$ is an integer point on $V(\sqrt{m})_{1,l}$. 
\end{proposition}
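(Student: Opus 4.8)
The plan is to reduce the two defining equations of $V(\sqrt m)_{1,l}$ to the single geometric assertion that $\sqrt m$ is a fixed point of the Möbius transformation attached to the matrix $E((b_1,a_1,\dots,a_l))$. The first step is to simplify $E$. Since $\det D(a)=-1$ for every $a$, and since $D(0)$ is the antidiagonal swap matrix, a direct computation gives $D(0)D(-b_1)D(0)=D(b_1)^{-1}$. Hence the palindromic padding $0,-b_1,0$ in the definition of $E$ collapses and
\begin{equation*}
E((b_1,a_1,\dots,a_l))=D(b_1)\,M([a_1,\dots,a_l])\,D(b_1)^{-1}.
\end{equation*}
This is the crux of the argument: the complicated padded word is nothing but the conjugate of the one-period matrix $M([a_1,\dots,a_l])$ by $D(b_1)$.

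Next I would exploit periodicity to locate the fixed point. Since we are given that the expansion converges to $\sqrt m$, the complete quotient $\theta:=[\overline{a_1,\dots,a_l}]$ is a well-defined real number with $\sqrt m=b_1+1/\theta$ and $\theta=[a_1,\dots,a_l,\theta]$. In matrix terms each $D(a)$ realizes the map $z\mapsto a+1/z$, so the self-referential relation $\theta=[a_1,\dots,a_l,\theta]$ says precisely that the vector $(\theta,1)^{\top}$ spans an eigenline of $M([a_1,\dots,a_l])$, while $\sqrt m=b_1+1/\theta$ says that $D(b_1)$ carries that eigenline to the line spanned by $(\sqrt m,1)^{\top}$. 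Combining these two facts with the conjugation identity shows that $(\sqrt m,1)^{\top}$ spans an eigenline of $E$; equivalently, $\sqrt m$ is a fixed point of the Möbius action of $E$.

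The final step is to convert the fixed-point condition into the defining equations and to check integrality. Abbreviating the entries of $E$ by $E_{11},E_{12},E_{21},E_{22}$, the eigenvector condition $E(\sqrt m,1)^{\top}=\lambda(\sqrt m,1)^{\top}$ gives, after eliminating $\lambda$, the identity $E_{11}\sqrt m+E_{12}=E_{21}m+E_{22}\sqrt m$. Because $m$ is positive and square-free, $\sqrt m\notin\Q$, so $1$ and $\sqrt m$ are linearly independent over $\Q$; comparing rational and irrational parts then yields $E_{22}-E_{11}=0$ and $E_{12}=mE_{21}$, which are exactly the two equations cutting out $V(\sqrt m)_{1,l}$. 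Integrality is immediate, since each entry of $E$ is an integer polynomial in $b_1,a_1,\dots,a_l$ and these are integers; hence $(b_1,a_1,\dots,a_l)\in\A^{1+l}$ is an integer point on $V(\sqrt m)_{1,l}$, as claimed.

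The step that requires the most care is the conjugation identity $D(0)D(-b_1)D(0)=D(b_1)^{-1}$ and the resulting recognition of the padded word as an honest matrix conjugate, together with the complete-quotient dictionary and its boundary indices; these are where sign and bookkeeping errors creep in, and where one must use that the \emph{given} expansion converges to $\sqrt m$ so that $\theta$ is a genuine real number. Everything downstream is then formal. It is worth emphasizing that this proposition is only the necessary (``if $\sqrt m$ has such an expansion, then $\ldots$'') direction, which is the easier implication; the substantive problem — deciding which integer points on these PCF varieties actually arise from bona fide PICF expansions — is the one addressed later through the explicit determination of the integer points.
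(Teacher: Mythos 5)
Your proof is correct, and there is in fact no in-paper argument to compare it with: the paper imports this proposition from \cite{BEJ} ((a) in Section 3.1) without proof, and what you give is essentially the argument that the cited definition encodes. Your key identity $D(0)D(-b_1)D(0)=D(b_1)^{-1}$ checks out (both sides have first row $(0,1)$ and second row $(1,-b_1)$, using $\det D(b_1)=-1$), so indeed $E((b_1,a_1,\dots,a_l))=D(b_1)\,M([a_1,\dots,a_l])\,D(b_1)^{-1}$; convergence of the given expansion to the irrational number $\sqrt m$ forces the tail $\theta=[\overline{a_1,\dots,a_l}]$ to converge to a finite nonzero real, periodicity makes $(\theta,1)^{\top}$ span an eigenline of $M([a_1,\dots,a_l])$, hence $(\sqrt m,1)^{\top}$ spans an eigenline of $E$, and separating rational and irrational parts of the eigenvector relation gives exactly the two equations, with integrality trivial. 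One remark worth recording: the equations you land on, $E_{22}-E_{11}=0$ and $E_{12}=mE_{21}$, agree with every equation the paper actually writes down in Section 3 (\cref{122ndequation,2nd13equation} and the $(1,1)$ display), but they do not agree with the literal text of the paper's Definition, which reads $E_{12}=-\alpha E_{21}$ and is then invoked with $\alpha=\sqrt m$; that is a notational slip in the paper (the variety is the one attached to the polynomial $x^2-m$), and your derivation resolves the convention in the intended way.
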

Hence, we should determine all elements of $V(\sqrt{m})_{1,l}(\Z)$ if we determine all $(1,l)$-type PICF expansions of $\sqrt{m}$.
Moreover, the converse of \cref{necessarycondition} also holds, that is, 
for $(b_1, a_1,\dots, a_l) \in V(\sqrt{m})_{1,l}(\Z)$, we obtain $\pm\sqrt{m}=[b_1, \overline{a_1,\dots, a_l}]$ if RHS converges (cf.\ \cite[Proposition 2.8]{BEJ}).\footnote{
More precisely, if $[b_1, \overline{a_1,\dots, a_l}]$ converges, then $[b_1, \overline{a_1,\dots, a_l}]$ is $\sqrt{m}$ or $-\sqrt{m}$.
However, we can determine the sign of $[b_1, \overline{a_1,\dots, a_l}]$ (cf. \cite[Theorem 4.3]{BEJ}). 
} 
This also plays an important role when we prove main theorems.

We also remark that if $a_1\cdots a_l = 0$, then the period of $[b_1, \overline{a_1, \dots, a_l}]$ is less than $l$ by \cite[Lemma 2.2]{BEJ}. 
Hence we only consider non-degenerate integer points on $V(\sqrt{m})_{1,l}(\Z)$, defined as follows.

\begin{definition}
Let $V(\sqrt{m})_{1, l}$ be a PCF variety.
An integer point on $V(\sqrt{m})_{1, l}$ $(b_1, a_1, \dots, a_l)$ is said to be non-degenerate if $a_i \neq 0$ for all $1\leq i\leq l$.
We write the set of non-degenerate integer points on $V(\sqrt{m})_{1,l}$ as $V(\sqrt{m})_{1,l}(\Z)_{\text{nd}}$. 
\end{definition}

\begin{remark}
There are some results about geometric properties of PCF varieties (\cite{JZ,JLZ}).
In particular, $V(\sqrt{m})_{1,l}(\Z)_{\text{nd}}$ is a finite set from the proof of \cite[Theorem 2.5]{JLZ} for $l \leq 3$. 
\end{remark}


\section{Integer points on PCF varieties}\label{Integerpoints}

In this section,
we determine all non-degenerate integer points on PCF varieties of $(1, l)$-type for each $l=1,2,3$.
We recall that
\begin{align*}
&m_1(t)=t^2+1, \\
&m_{2}(s,t)=s^2t^2+t, \\
&m'_{2}(s,t)=s^2t^2+2t, \\
&m_3(s,t)=16t^2s^4 + 8ts^3 + (8t^2 + 1)s^2 + 6ts + t^2 + 1.
\end{align*}


\subsection{$(1,1)$-type}

We see that $E((y_1,x_1))$ is
\begin{align*}
\begin{bmatrix} 
y_1& x_1y_1+1-y_1^2\\
1 & x_1-y_1
\end{bmatrix}.
\end{align*}
Hence $V(\sqrt{m})_{1,1}$ is given by
\begin{align*}
\begin{cases}
x_1-2y_1 = 0,\\
y_1^2-x_1y_1-1 = -m.
\end{cases} 
\end{align*}
By easy calculation, $V(\sqrt{m})_{1,1}$ consists of two points $\pm(\sqrt{m-1}, 2\sqrt{m-1})$. 
Therefore, we immediately obtain the following propositions (cf.\ \cite[Proposition 5.3]{BEJ}).

\begin{proposition}\label{11iffcondition}
$V(\sqrt{m})_{1,1}(\mathbb{Z})$  has a non-degenerate integer point if and only if $m=m_1(t)$ for some $t\in \Z\setminus\{0\}$.
\end{proposition}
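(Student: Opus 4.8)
The statement to prove is \cref{11iffcondition}: the variety $V(\sqrt{m})_{1,1}(\Z)$ has a non-degenerate integer point if and only if $m = m_1(t) = t^2+1$ for some $t \in \Z\setminus\{0\}$. The plan is to leverage the explicit computation already carried out in the excerpt, where $V(\sqrt{m})_{1,1}$ was shown to consist precisely of the two points $\pm(\sqrt{m-1},\, 2\sqrt{m-1})$ in the coordinates $(y_1, x_1)$. Thus the entire problem reduces to analyzing when these two points are simultaneously integral and non-degenerate.

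First I would establish the forward direction. Suppose $(y_1, x_1) \in V(\sqrt{m})_{1,1}(\Z)$ is a non-degenerate integer point, meaning $x_1 \neq 0$ (here $l=1$, so non-degeneracy is the single condition $a_1 = x_1 \neq 0$). From the explicit description, we have $y_1 = \pm\sqrt{m-1}$ and $x_1 = 2y_1$, so integrality of $y_1$ forces $m-1$ to be a perfect square; write $m - 1 = t^2$, i.e.\ $m = t^2 + 1 = m_1(t)$ for some $t \in \Z$. The non-degeneracy $x_1 = 2y_1 \neq 0$ is equivalent to $t \neq 0$, giving $t \in \Z\setminus\{0\}$ as required.

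For the converse, I would simply exhibit the point: given $m = m_1(t) = t^2+1$ with $t \in \Z\setminus\{0\}$, take $(y_1, x_1) = (t, 2t)$. A direct substitution into the two defining equations $x_1 - 2y_1 = 0$ and $y_1^2 - x_1 y_1 - 1 = -m$ verifies membership in $V(\sqrt{m})_{1,1}(\Z)$, and since $t \neq 0$ we have $x_1 = 2t \neq 0$, so the point is non-degenerate. This establishes both directions and completes the proof.

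I do not anticipate a genuine obstacle here, as the defining equations of $V(\sqrt{m})_{1,1}$ were already solved completely in closed form earlier in the section; the only point requiring care is the bookkeeping around the non-degeneracy condition and the correct translation between "$m-1$ is a perfect square with $t \neq 0$" and "$m = m_1(t)$ for some $t \in \Z\setminus\{0\}$." This is the kind of short, clean deduction that the reference to \cite[Proposition 5.3]{BEJ} suggests is routine, so the write-up should be essentially a two-line argument invoking the explicit parametrization already displayed.
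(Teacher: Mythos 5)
Your proposal is correct and follows essentially the same route as the paper: the paper also derives the defining equations $x_1-2y_1=0$ and $y_1^2-x_1y_1-1=-m$, solves them to get the two points $\pm(\sqrt{m-1},\,2\sqrt{m-1})$, and reads off the proposition ``immediately'' from this parametrization. Your write-up merely makes explicit the integrality and non-degeneracy bookkeeping that the paper leaves implicit, which is exactly the intended argument.
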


\begin{proposition}\label{11effective}
Suppose that $m=m_1(t)$ for some $t\in \Z\setminus\{0\}$.
Then, 
\begin{align*}
V(\sqrt{m})_{1,1}(\Z)_{\text{nd}}= \{\pm (t, 2t)\}.  
\end{align*}
\end{proposition}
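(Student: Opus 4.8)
The plan is to read the answer off directly from the explicit description of $V(\sqrt{m})_{1,1}$ established immediately above the proposition. First I would recall that the two defining equations $x_1-2y_1=0$ and $y_1^2-x_1y_1-1=-m$ reduce, upon substituting $x_1=2y_1$ into the second equation, to the single condition $y_1^2=m-1$ together with $x_1=2y_1$. Thus every point of $V(\sqrt{m})_{1,1}$ is determined by its $y_1$-coordinate, which must satisfy $y_1^2=m-1$; this is exactly the observation that the variety consists of the two points $\pm(\sqrt{m-1},2\sqrt{m-1})$.

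Next, specializing to $m=m_1(t)=t^2+1$ gives $m-1=t^2$, so the defining condition becomes $y_1^2=t^2$. Over $\Z$ this forces $y_1=\pm t$, and correspondingly $x_1=2y_1=\pm 2t$. Hence the integer points of $V(\sqrt{m})_{1,1}$ are exactly $(t,2t)$ and $(-t,-2t)$, that is, $\{\pm(t,2t)\}$.

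Finally I would check the non-degeneracy condition. Since $l=1$, a point $(y_1,x_1)=(b_1,a_1)$ is non-degenerate precisely when $a_1=x_1\neq 0$. As $x_1=\pm 2t$ and $t\in\Z\setminus\{0\}$ by hypothesis, this holds automatically, so both points lie in $V(\sqrt{m})_{1,1}(\Z)_{\text{nd}}$. This gives the claimed equality.

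There is essentially no serious obstacle here: the entire content is the elementary fact that the integer solutions of $y_1^2=t^2$ are $y_1=\pm t$, together with the remark that $t\neq 0$ guarantees $x_1\neq 0$. The only mild subtlety worth flagging is that the equality $\{\pm(\lvert t\rvert,2\lvert t\rvert)\}=\{\pm(t,2t)\}$ holds regardless of the sign of $t$, so one need not track the sign of $t$ separately when passing from $\sqrt{m-1}=\lvert t\rvert$ to the stated parametrization.
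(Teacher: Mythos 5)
Your proposal is correct and follows essentially the same route as the paper: the authors note that the defining equations $x_1-2y_1=0$ and $y_1^2-x_1y_1-1=-m$ reduce by an easy calculation to the two points $\pm(\sqrt{m-1},2\sqrt{m-1})$, from which the proposition is immediate upon setting $m=t^2+1$, exactly as you do. Your explicit substitution, the solution $y_1=\pm t$ of $y_1^2=t^2$, and the check that $x_1=\pm 2t\neq 0$ gives non-degeneracy simply spell out the steps the paper leaves implicit.
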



\subsection{$(1,2)$-type}

We see that $E((y_1,x_1,x_2))$ is 
\begin{align*}
\begin{bmatrix} 
y_1x_1+1& y_1x_1x_2+x_2-y_1^2x_1\\
x_1 & x_1x_2-y_1x_1+1
\end{bmatrix}.
\end{align*}
Hence $V(\sqrt{m})_{1,2}$ is given by
\begin{numcases}{}
x_1x_2 - 2y_1x_1 = 0\label{121stequation}\\
y_1^2x_1-y_1x_1x_2-x_2=-mx_1\label{122ndequation}
\end{numcases}

\begin{proposition}\label{12iffcondition}
$V(\sqrt{m})_{1,2}$ has a non-degenerate integer point if and only if $m=(st/2)^2 + t$ for some $s, t\in \Z\setminus\{0\}$ with $2\mid st$.
\end{proposition}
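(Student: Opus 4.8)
The plan is to analyze the system consisting of \cref{121stequation} and \cref{122ndequation} directly, exploiting the non-degeneracy hypothesis $x_1, x_2 \neq 0$. Since $x_1 \neq 0$, I would first divide \cref{121stequation} by $x_1$ to obtain the clean relation $x_2 = 2y_1$. This is the crucial simplification: it expresses $x_2$ in terms of $y_1$ and immediately forces $x_2$ to be even (as an integer), which is precisely the parity constraint $2 \mid st$ that appears in the statement. I would then substitute $x_2 = 2y_1$ into \cref{122ndequation} to eliminate $x_2$ entirely and reduce the problem to a single equation in the two variables $x_1, y_1$.

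After substitution, \cref{122ndequation} becomes $y_1^2 x_1 - 2y_1^2 x_1 - 2y_1 = -m x_1$, that is, $-y_1^2 x_1 - 2y_1 = -m x_1$, which rearranges to $m x_1 = y_1^2 x_1 + 2 y_1$, or $m = y_1^2 + 2y_1/x_1$. For this to give an integer $m$ we need $x_1 \mid 2y_1$; writing $t := 2y_1/x_1 \in \Z$, the non-degeneracy $x_1 \neq 0$ and (as I expect to verify) $y_1 \neq 0$ forces $t \neq 0$. Setting $s := x_1$, the relation $x_2 = 2y_1 = st$ shows $2 \mid st$ automatically and gives $m = y_1^2 + t = (st/2)^2 + t$, which is exactly the claimed form. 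Conversely, given $s, t \in \Z\setminus\{0\}$ with $2 \mid st$ and $m = (st/2)^2 + t$, I would exhibit the explicit non-degenerate integer point $(y_1, x_1, x_2) = (st/2, s, st)$ and check it satisfies both defining equations, completing the ``if'' direction.

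The main subtlety, which I would treat carefully rather than as routine, is the bookkeeping around which variables are forced to be non-zero and how the integrality conditions interlock. In particular I must confirm that $y_1 \neq 0$: if $y_1 = 0$ then $x_2 = 2y_1 = 0$, contradicting non-degeneracy, so this is clean. I also need to ensure the parametrization $(s,t) \mapsto (st/2, s, st)$ together with the divisibility $x_1 \mid 2y_1$ genuinely matches the set of \emph{all} non-degenerate integer points and not merely a subset; the reparametrization $s = x_1$, $t = 2y_1/x_1$ is a bijection onto pairs with the stated parity condition, so no points are lost. The essential content is therefore elementary, and the only real care needed is in handling the equivalence $x_1 \mid 2 y_1 \iff m \in \Z$ and translating it faithfully into the parity statement $2 \mid st$.
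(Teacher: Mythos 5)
Your proof is correct and follows essentially the same route as the paper's own argument: derive $x_2=2y_1$ from the first defining equation, substitute to get $m=y_1^2+2y_1/x_1$, use integrality to write $2y_1=tx_1$ with $s=x_1$, and exhibit $(st/2,s,st)$ for the converse. Your explicit check that $y_1\neq 0$ (hence $t\neq 0$) via non-degeneracy is a small point the paper leaves implicit, but it is not a different approach.
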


\begin{proof}
Suppose that $m=(st/2)^2 + t$ for some $s, t\in \Z\setminus\{0\}$ with $2\mid st$.
Then, we find $(st/2, s, st)\in V(\sqrt{m})_{1,2}(\Z)_{\text{nd}}$ and we can check the if part.

Hence it is sufficient to show the only if part.
Suppose that $(y_1, x_1, x_2)\in V(\sqrt{m})_{1,2}$ is a non-degenerate integral point. 
From (\ref{121stequation}), we obtain $x_2=2y_1$ since $x_1 \neq 0$.
By substituting it into (\ref{122ndequation}), we obtain $y_1^2x_1+2y_1-mx_1=0$ and $m=y_1^2+2y_1/x_1$. 
Since $m \in \Z$, $2y_1=tx_1$ for some $t \in \Z\setminus\{0\}$.
Hence, we obtain $m=(tx_1/2)^2+t$ and complete the proof by putting $s=x_1$. 
\end{proof}

\begin{remark}\label{m2modify}
Since $s$ or $t$ is even from $2\mid st$, 
the condition $m=(st/2)^2 + t$ for some $s, t\in \Z\setminus\{0\}$ with $2\mid st$ is equivalent to $m = m_2(s,t) \ \text{or} \ m'_2(s,t)$ for some $s,t\in \Z\setminus\{0\}$.  
\end{remark}

From the proof of \cref{12iffcondition} and \cref{m2modify}, 
we obtain the following corollary.

\begin{corollary}\label{12effective}
\begin{align*}
&V(\sqrt{m})_{1,2}(\Z)_{\text{nd}}\\
&=\{\pm(st, 2s, 2st) \mid \text{$m = m_2(s,t)$, $s,t\neq 0$}\}\cup\{\pm(st,s,2st) \mid \text{$m=m'_2(s,t)$, $s,t\neq 0$}\}.
\end{align*}
\end{corollary}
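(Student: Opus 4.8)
The plan is to combine the structural description of non-degenerate integer points obtained in the proof of \cref{12iffcondition} with the parity dichotomy recorded in \cref{m2modify}, and then to verify both inclusions directly.

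First, for the inclusion $\subseteq$, I would reuse the computation in the proof of \cref{12iffcondition}. Starting from a point $(y_1,x_1,x_2)\in V(\sqrt{m})_{1,2}(\Z)_{\text{nd}}$, equation (\ref{121stequation}) together with $x_1\neq 0$ forces $x_2=2y_1$, and then (\ref{122ndequation}) gives $m=y_1^2+2y_1/x_1$. Integrality of $m$ produces an integer $t:=2y_1/x_1$, so the point takes the normal form
\[
(y_1,x_1,x_2)=\Bigl(\tfrac{tx_1}{2},\,x_1,\,tx_1\Bigr),\qquad m=\Bigl(\tfrac{tx_1}{2}\Bigr)^2+t .
\]
Non-degeneracy ($x_2\neq0$) is equivalent to $y_1\neq0$, hence to $t\neq0$; together with $x_1\neq0$ this records exactly the constraints $x_1,t\in\Z\setminus\{0\}$.

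Next I would split according to the parity forced by $2\mid tx_1$ (needed for $y_1\in\Z$), exactly as in \cref{m2modify}. If $x_1$ is even, write $x_1=2s$; the normal form becomes $(st,2s,2st)$ with $m=s^2t^2+t=m_2(s,t)$. If instead $t$ is even, write $t=2t'$; the normal form becomes $(st',s,2st')$ with $m=s^2t'^2+2t'=m'_2(s,t')$. After relabelling the free parameter, this places the point in the first or the second family of the claimed union, respectively; when both $x_1$ and $t$ are even the point lies in both parametrizations, which is harmless. This proves $\subseteq$.

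For the reverse inclusion, I would substitute each listed point into (\ref{121stequation}) and (\ref{122ndequation}) and check that both defining equations hold identically in $s,t$ when $m=m_2(s,t)$ (resp.\ $m=m'_2(s,t)$); non-degeneracy is immediate since $s,t\neq0$. The sign $\pm$ is accounted for by the fact that the two defining equations are invariant under $(y_1,x_1,x_2)\mapsto-(y_1,x_1,x_2)$, so the negative of each listed point again lies on the variety; equivalently, since $m_2$ and $m'_2$ depend on $s$ only through $s^2$, the substitution $s\mapsto-s$ preserves $m$ while negating the point. I expect no genuine obstacle here: the only point requiring care is the bookkeeping of the parity split, namely confirming that the two families together are exhaustive (which is exactly the content of $2\mid tx_1$) and that the $\pm$ is matched to the negation symmetry rather than being an independent source of solutions.
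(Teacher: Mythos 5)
Your proposal is correct and follows essentially the same route as the paper: the corollary there is deduced precisely by extracting the normal form $\bigl(tx_1/2,\,x_1,\,tx_1\bigr)$ from the proof of \cref{12iffcondition} and applying the parity dichotomy of \cref{m2modify}, with the reverse inclusion being the ``if'' part of that proposition. Your additional observations (negation invariance of \cref{121stequation} and \cref{122ndequation}, and the $s\mapsto-s$ symmetry accounting for the $\pm$) are correct and simply make explicit the bookkeeping the paper leaves implicit.
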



\subsection{$(1,3)$-type}

We see that $E((y_1,x_1,x_2,x_3))$ is
\begin{align*}
\begin{bmatrix} 
y_1x_2x_1 + (x_2 + y_1) & ((y_1x_3 - y_1^2)x_2 + y_1)x_1 + (x_3 - y_1)x_2 + y_1x_3 - y_1^2 + 1\\
x_2x_1 + 1   &  ((x_3 - y_1)x_2 + 1)x_1 + x_3 - y_1
\end{bmatrix}.
\end{align*}
Hence $V(\sqrt{m})_{1,3}$ is given by
\begin{numcases}{}
2y_1x_2x_1+2y_1-x_3x_2x_1+x_2-x_1-x_3 = 0,\label{1st13equation}\\
m(x_2x_1+1) = y_1(x_3x_2x_1+x_1+x_3-x_2)-y_1^2(x_2x_1+1)+x_3x_2+1.\label{2nd13equation}
\end{numcases}

Before we determine non-degenerate integral points on $V(\sqrt{m})_{1,3}$, 
we give a necessary and sufficient condition for existing non-degenerate integral points on $V(\sqrt{m})_{1,3}(\Z)$.

\begin{proposition}\label{13iffcondition}
$V(\sqrt{m})_{1,3}$ has a non-degenerate integral point if and only if
$m= m_3(s,t)$ for some $s, t\in \Z$ with $(s,t)\neq(0,0)$.
\end{proposition}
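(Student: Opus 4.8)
The plan is to follow the same two-step strategy that worked for the $(1,1)$- and $(1,2)$-type cases, but now with the complication that the defining equations \eqref{1st13equation} and \eqref{2nd13equation} are genuinely multivariate and nonlinear. First I would establish the \emph{if} direction, which is elementary: assuming $m=m_3(s,t)$ for some $s,t\in\Z$ with $(s,t)\neq(0,0)$, I would simply exhibit an explicit non-degenerate integer point. The expansion \eqref{type13} from \cref{Maintheorem1} already tells us that $(y_1,x_1,x_2,x_3)=\bigl(s+(4s^2+1)t,\,2s,\,2s,\,2(s+(4s^2+1)t)\bigr)$ ought to lie on the variety, so I would verify by direct substitution that this tuple satisfies both \eqref{1st13equation} and \eqref{2nd13equation}, and that its middle coordinates $x_1,x_2,x_3$ are all nonzero provided $(s,t)\neq(0,0)$. (A small case check is needed when $s=0$, where one falls back on the point coming from \eqref{newPICFexpansion1}.)

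The substance lies in the \emph{only if} direction. Given a non-degenerate integer point $(y_1,x_1,x_2,x_3)$ with $x_1,x_2,x_3\neq 0$, I must show $m$ necessarily has the form $m_3(s,t)$. The natural approach is to eliminate variables using the linear-in-$m$ structure of \eqref{2nd13equation}: solving for $m$ gives
\begin{align*}
m=\frac{y_1(x_3x_2x_1+x_1+x_3-x_2)-y_1^2(x_2x_1+1)+x_3x_2+1}{x_2x_1+1},
\end{align*}
and the integrality constraint $m\in\Z$ forces $(x_2x_1+1)$ to divide the numerator. Meanwhile \eqref{1st13equation} is a single polynomial relation among $y_1,x_1,x_2,x_3$ that I would use to cut down the degrees of freedom, ideally expressing one of the $x_i$ (say $x_3$) in terms of the others. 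The aim is to reduce to a divisibility condition in two or three integer parameters and then solve that condition explicitly, ultimately parametrizing the solution set by two integers $s,t$ in such a way that the resulting $m$ collapses to $m_3(s,t)$.

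The main obstacle I anticipate is the elimination and divisibility analysis itself: unlike the $(1,2)$ case, where \eqref{121stequation} cleanly gave $x_2=2y_1$ and a single substitution finished the argument, here \eqref{1st13equation} does not linearly isolate any variable, and the denominator $x_2x_1+1$ does not obviously split off a factor of the numerator. I expect to need a careful case analysis driven by the arithmetic of $\gcd(x_1,x_2)$ and the possible signs of the $x_i$, together with the symmetry of the expansion (the palindromic shape $x_1=x_2=2s$ suggested by \eqref{type13}) to guess and then justify the correct substitution $x_1=x_2=2s$, $y_1=s+(4s^2+1)t$. In effect, the hard part is proving that every non-degenerate solution is forced into this one-parameter-per-coordinate form rather than merely checking that such forms are solutions. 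Once the parametrization is pinned down, reducing the expression for $m$ to $m_3(s,t)$ is a routine algebraic simplification, and this same bookkeeping will hand us the effective description of $V(\sqrt{m})_{1,3}(\Z)_{\mathrm{nd}}$ needed for \cref{Maintheorem1}.
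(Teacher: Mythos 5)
There is a genuine gap in your only-if direction: your stated goal is to show that every non-degenerate integer point is forced into the palindromic form $x_1=x_2=2s$, $x_3=2y_1$, $y_1=s+(4s^2+1)t$, but that claim is false. For instance $(y_1,x_1,x_2,x_3)=(-1,1,-2,1)$ is a non-degenerate point on $V(\sqrt{2})_{1,3}$ (it is the point behind \eqref{newPICFexpansion1} with $t=1$), and it has $x_1\neq x_2$ with $x_1$ odd. As \cref{13effective} records, besides the palindromic family there are four further families of non-degenerate points, $\pm(-2+t,1,-2,-1+2t)$ and $\pm(-1+t,2,-1,1+2t)$ for $m=m_3(0,t)$, and $\pm(2+5t,-2,3,3+10t)$ and $\pm(1+5t,3,-2,3+10t)$ for $m=m_3(\pm1,t)$. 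The proposition survives only because these exceptional families happen to produce $m=t^2+1=m_3(0,t)$ and $m=25t^2+14t+2=m_3(\pm1,t)$, i.e.\ values of $m_3$ at $s=0,\pm1$. A proof that tries to rule these points out, as yours would, cannot succeed; a proof that does not notice them is incomplete.

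The idea you are missing is that \eqref{1st13equation} factors: it is exactly $(2y_1-x_3)(x_1x_2+1)=x_1-x_2$. Comparing $\abs{x_1-x_2}$ with $\abs{x_1x_2+1}$ then gives a clean trichotomy: (i) if $\abs{x_1-x_2}>\abs{x_1x_2+1}$, then one of $x_1,x_2$ equals $\pm1$ or $(x_1,x_2)=\pm(2,-2)$, and \eqref{2nd13equation} forces $m=t^2+1$; (ii) if $\abs{x_1-x_2}\le\abs{x_1x_2+1}$ and $\abs{2y_1-x_3}=1$, then $(x_1,x_2)$ is pinned to $(-3,2)$ or $(-2,3)$ (or their mirrors) and $m=25t^2+14t+2$; (iii) only when $2y_1=x_3$ does one get $x_1=x_2$, and there a parity argument in \eqref{2nd13equation} (not a $\gcd$ argument) shows $x_1=2s$ is even, after which integrality of $m=y_1^2+(4sy_1+1)/(4s^2+1)$ gives $y_1\equiv s\bmod(4s^2+1)$ and $m=m_3(s,t)$. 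Your proposed route---solving \eqref{2nd13equation} for $m$ and analyzing divisibility of the numerator by $x_1x_2+1$, organized by $\gcd(x_1,x_2)$ and signs---has no evident mechanism to produce this case split or to surface the exceptional families. Your if-direction is fine; indeed your observation that the palindromic point degenerates at $s=0$, so one must fall back on the point from \eqref{newPICFexpansion1}, is a detail the paper itself glosses over.
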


\begin{proof}
    Suppose that $m=16t^2s^4+8ts^3+(8t^2+1)s^2+6ts+t^2+1$ for some $s,t\in \Z$ with $(s,t)\neq (0,0)$.
    Then, we can take non-degenerate integral points such as $(s+(4s^2+1)t, 2s, 2s, 2(s+(4s^2+1)t))$ and we showed the if part.
    
    Hence, it is sufficient to show the only if part.
    Suppose that $(y_1,x_1,x_2,x_3)\in V(\sqrt{m})_{1,3}(\Z)_{\text{nd}}$. 
    Then, from (\ref{1st13equation}), we obtain
    \begin{align}\label{firstequation}
    \abs{(2y_1-x_3)(x_2x_1+1)} = \abs{x_1-x_2}.
    \end{align} 
    In the following, we divide the proof into two cases, 
    $\abs{x_1-x_2} > \abs{x_2x_1+1}$ and $\abs{x_1-x_2} \leq \abs{x_2x_1+1}$.

    \begin{enumerate}
    \item If $\abs{x_1-x_2} > \abs{x_2x_1+1}$, 
    we obtain
    \[
    \begin{cases}
    \frac{-x_1-1}{x_1-1} < x_2 < \frac{x_1-1}{x_1+1} & \text{if $x_1 \neq \pm 1$},\\
    x_2>0 & \text{if $x_1=-1$},\\
    x_2<0 & \text{if $x_1=1$}.
    \end{cases}
    \]
    Hence, it is sufficient to consider the following six cases;
    \begin{align*}
    &(x_1, x_2) = \pm(2,-2), 
    \text{$x_1\geq 2$ and $x_2=-1$},
    \text{$x_1\leq -2$ and $x_2=1$},\\
    &\text{$x_1=-1$ and $x_2>0$}, 
    \text{$x_1=1$ and $x_2<0$}.
    \end{align*}
    
    If $(x_1, x_2) = \pm (2,-2)$, we immediately show that $V(\sqrt{m})_{1,3}(\Z)_{\text{nd}}= \emptyset$.
    Hence, we consider the left cases. 
    \begin{enumerate}
        \item If $\abs{x_1} \geq 2$, 
        consider the case $x_1 \geq 2$ and $x_2=-1$. 
        By substituting $x_2=-1$ into (\ref{1st13equation}),
        we obtain $(x_1-1)(x_3-2y_1-1)=2$ and $(x_1, x_3)=(3, 2y_1+2), (2, 2y_1+3)$ since $x_1\geq 2$.
        If $(x_1, x_3)=(3, 2y_1+2)$, there are no non-degenerate integral points since $y_1\not\in \Z$. 
        If $(x_1, x_3)=(2, 2y_1+3)$, we obtain $y_1^2+2y_1+2-m=0$ by (\ref{2nd13equation}) and $y_1=-1\pm \sqrt{m-1}$. 
        Since $y_1 \in \Z$, the necessary condition for $V(\sqrt{m})_{1,3}(\Z)_{\text{nd}}\neq\emptyset$ is $m=t^2+1$ for some $t\in \Z\setminus\{0\}$.  
    
        Similarly, the necessary condition for $V(\sqrt{m})_{1,3}(\Z)_{\text{nd}}\neq\emptyset$ is $m=t^2+1$ for some $t\in \Z\setminus\{0\}$ when $x_1 \leq -2$, $x_2 = 1$. 
    
        \item If $\abs{x_1}=1$, 
        consider the case $x_1=-1$ and $x_2 > 0$. 
        By substituting $x_1=-1$ into (\ref{1st13equation}),
        we obtain $(x_2-1)(x_3-2y_1+1)=-2$ and $ (x_2, x_3)=(2,2y_1-3), (3,2y_1-2)$ since $x_2> 0$.
        If $(x_2, x_3)=(3,2y_1-2)$, there are no non-degenerate integral points since $y_1\not\in \Z$.
        If $(x_2, x_3)=(2, 2y_1-3)$, we obtain $y_1^2-4y_1+5-m=0$ from (\ref{2nd13equation}) and $y_1=2\pm\sqrt{m-1}$.
        Since $y_1 \in \Z$, the necessary condition for $V(\sqrt{m})_{1,3}(\Z)_{\text{nd}}\neq\emptyset$ is $m=t^2+1$ for some $t\in \Z\setminus\{0\}$. 
    
        Similarly, 
        the necessary condition for $V(\sqrt{m})_{1,3}(\Z)_{\text{nd}}\neq\emptyset$ is $m=t^2+1$ for some $t\in \Z\setminus\{0\}$ when $x_1=1, x_2 < 0$.  
    \end{enumerate}
    
    \item If $\abs{x_1-x_2} \leq \abs{x_2x_1+1}$,
    we obtain $(\abs{2y_1-x_3}-1)\abs{x_2x_1+1} \leq 0$.
    In what follows, we assume  $\abs{2y_1-x_3}-1\leq 0$ since we can check that there are no non-degenerate integral points if $\abs{2y_1-x_3}-1>0$\footnote{In this case, we obtain $x_2x_1=-1, x_2=x_1$ but there are no integer $x_2,x_1$ satisfying the equations.}. 
    
    \begin{enumerate}
    \item If $\abs{2y_1-x_3}-1=0$, 
    consider the case $2y_1-x_3=1$. 
    From (\ref{1st13equation}),  we obtain $(x_1+1)(x_2-1)=-2$ and $(x_1, x_2)=(-3, 2), (-2,3)$. 
    If $(x_1, x_2)=(-3,2)$, we obtain $5y_1^2-4y_1+1-5m=0$ from (\ref{2nd13equation}) and $y_1 = (2\pm \sqrt{25m-1})/5$.
    Since $y_1\in \Z$, the necessary condition for $V(\sqrt{m})_{1,3}(\Z)_{\text{nd}}\neq\emptyset$ is $m=25t^2+14t+2$ for some $t \in \Z$.
    If $(x_1, x_2)=(-2,3)$, we obtain $5y_1^2-6y_1+2-5m=0$ from (\ref{2nd13equation}) and $y_1 = (3\pm \sqrt{25m-1})/5$.   
    Since $y_1\in \Z$, the necessary condition for $V(\sqrt{m})_{1,3}(\Z)_{\text{nd}}\neq\emptyset$ is $m=25t^2+14t+2$ for some $t \in \Z$.
    
    Similarly, the necessary condition for $V(\sqrt{m})_{1,3}(\Z)_{\text{nd}}\neq\emptyset$ is $m=25t^2+14t+2$ for some $t \in \Z$ when $2y_1-x_3=-1$. 
        
    \item If $\abs{2y_1-x_3}-1 < 0$\label{thirdcase}, 
    we obtain $2y_1=x_3$ from (\ref{firstequation}).
    By substituting it into (\ref{1st13equation}), we obtain $x_1=x_2$. 
    By combining this and (\ref{2nd13equation}), we obtain 
    \begin{align}\label{thirdcase}
    (y_1^2-m)x_1^2+2y_1x_1+y_1^2+1-m=0.
    \end{align}
    Hence, $2 \mid x_1$ and we can write $x_1=2s$ for some $s\in \Z\setminus\{0\}$. 
    By combining (\ref{thirdcase}), $m=y_1^2+\frac{4sy_1+1}{4s^2+1}\in \Z$ and $4sy_1 \equiv -1 \bmod{4s^2+1}$.
    Thus, we obtain $y_1=s+(4s^2+1)t$ for some $t \in \Z$ and the necessary condition for $V(\sqrt{m})_{1,3}(\Z)_{\text{nd}}\neq\emptyset$ is $m=16t^2s^4+8ts^3+(8t^2+1)s^2+6ts+t^2+1$ for some $s\in \Z\setminus \{0\}$ and $t\in \Z$.  
    \end{enumerate}
    \end{enumerate}
    Therefore, combining the results of (1) and (2),
    we showed that if $V(\sqrt{m})_{1,3}(\Z)_{\text{nd}}\neq\emptyset$ then $m=16t^2s^4+8ts^3+(8t^2+1)s^2+6ts+t^2+1=m_3(s,t)$ for some $s,t \in\Z$ with $(s,t)\neq (0,0)$ since $t^2+1=m_3(0,t)$ and $25t^2+14t+2=m_3(\pm 1,t)$.
\end{proof}

By tracing the proof of the only if part of \cref{13iffcondition} and determining $(y_1,x_1,x_2,x_3)$ for each case,
we obtain the following proposition.

\begin{corollary}\label{13effective}
\begin{align*}
&V(\sqrt{m})_{1,3}(\Z)_{\text{nd}}\\
&=\{\pm(s+(4s^2+1)t, 2s, 2s, 2(s+(4s^2+1)t))\mid m=m_3(s,t), s,t \neq 0 \}\\
&\cup\{\pm(-2+t, 1, -2, -1+2t), \pm(-1+t, 2, -1, 1+2t)\mid m=m_3(0,t), t\neq 0\}\\
&\cup\{\pm(2+5t, -2, 3, 3+10t), \pm(1+5t, 3, -2, 3+10t) \mid m=m_3(\pm 1,t)\}.
\end{align*}
\end{corollary}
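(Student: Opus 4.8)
The plan is to take the case division already carried out in the proof of \cref{13iffcondition} and, in each branch, solve \emph{completely} for the tuple $(y_1,x_1,x_2,x_3)$ instead of only recording the resulting shape of $m$. Two observations organize the bookkeeping. First, with $m$ held fixed, the polynomial in (\ref{1st13equation}) is odd and the difference of the two sides of (\ref{2nd13equation}) is even under the sign change $(y_1,x_1,x_2,x_3)\mapsto(-y_1,-x_1,-x_2,-x_3)$; hence this sign change preserves $V(\sqrt m)_{1,3}(\Z)_{\text{nd}}$, which is exactly the origin of the symbols $\pm$ in the statement and lets me treat paired subcases together. Second, in every branch of \cref{13iffcondition} the pair $(x_1,x_2)$ is already pinned to finitely many values, and $x_3$ is determined from $y_1$ by a (branch-dependent) affine relation, so in each branch it remains only to solve for $y_1$.

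I would then run through the branches. In the branch with $2y_1=x_3$ one has $x_1=x_2=2s$ and $y_1=s+(4s^2+1)t$, which at once gives $(s+(4s^2+1)t,\,2s,\,2s,\,2(s+(4s^2+1)t))$ and the first family. In the branch $\abs{x_1-x_2}>\abs{x_1x_2+1}$ the admissible $(x_1,x_2)$ force $m=t^2+1=m_3(0,t)$; substituting this makes $m-1=t^2$ a perfect square, so the solution $y_1=-1+t$ is integral (the two roots being interchanged by $t\mapsto-t$), and together with the relation fixing $x_3$ this yields $(-1+t,2,-1,1+2t)$, whose negation-partner arising from the subcase $x_1=-1$ is $(-2+t,1,-2,-1+2t)$; these are the second family. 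In the branch $2y_1-x_3=\pm1$ one gets $m=25t^2+14t+2=m_3(\pm1,t)$, so $25m-1=(25t+7)^2$; for each admissible $(x_1,x_2)$ exactly one sign of the root makes $y_1$ integral, and carrying this through for $(x_1,x_2)=(-2,3)$ and $(3,-2)$ (with their negations supplying the remaining choices) produces $(2+5t,-2,3,3+10t)$ and $(1+5t,3,-2,3+10t)$, the third family. Finally a direct substitution into (\ref{1st13equation}) and (\ref{2nd13equation}) checks that each listed tuple indeed lies on the variety, giving the reverse inclusion.

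The algebra inside each branch is routine once $m$ is in parametric form. The step I expect to be the real obstacle is the bookkeeping at the edges of the parameter ranges. On one side I must record, for each choice of sign of the square root, which choice yields $y_1\in\Z$ and confirm that the opposite choice merely returns the negated point already accounted for by the sign-involution, so that no point is counted twice. On the other side a single $m$ can be written as $m_3(s,t)$ in more than one way --- most conspicuously $m_3(s,0)=s^2+1=m_3(0,s)$ when $t=0$, and $m_3(\pm1,0)=2$ --- so I would have to check with care that the boundary values $s=0$, $t=0$, $s=\pm1$ are attributed to the three families exactly as written, so that the displayed union is neither missing a point nor listing a spurious one.
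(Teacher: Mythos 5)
Your plan is exactly the paper's own proof: the paper deduces \cref{13effective} in a single sentence, by tracing the case division in the proof of \cref{13iffcondition} and recording the tuple $(y_1,x_1,x_2,x_3)$ in each branch, with the sign involution $(y_1,x_1,x_2,x_3)\mapsto(-y_1,-x_1,-x_2,-x_3)$ producing the $\pm$; your branch-by-branch computations recover precisely the listed representatives, so the two arguments coincide. The boundary check you flag at the end is indeed the one delicate point --- carrying it out shows that the first family must be read with $(s,t)\neq(0,0)$, i.e.\ $s\neq 0$ and $t\in\Z$ arbitrary, since for instance $(s,2s,2s,2s)$ with $m=m_3(s,0)=s^2+1$ lies in $V(\sqrt{m})_{1,3}(\Z)_{\text{nd}}$ yet in none of the three families if $t=0$ is excluded --- but this concerns the statement's indexing rather than your method, which is the paper's.
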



\section{Proof of the main theorems}\label{proof}

\subsection{The proof of \cref{iffcondition}}\label{proof of Maintheorem1}
First, the only if part follows from \cref{necessarycondition} and the only if part of \cref{11iffcondition}(resp.\ \cref{12iffcondition}, \cref{13iffcondition}) in $l=1$(resp.\ $l=2$, $l=3$). 
To show the if part, it is sufficient to check that $[b_1,\overline{a_1,\dots,a_l}]$ converge for some $(b_1, a_1, \dots, a_l) \in V(\sqrt{m})_{1,l}(\Z)_{\text{nd}}$ by the converse of \cref{necessarycondition}.
Indeed, if a PICF $[b_1,\overline{a_1,\dots,a_l}]$ converge to $-\sqrt{m_l(s,t)}$, we just take $(-b_1,-a_1,...,-a_l)$ instead of $(b_1,a_1,...,a_l)$.
For the convergence, we check the conditions of \cite[Theorem 4.3]{BEJ}.
If $l=1$, $[t,\overline{2t}]$ converges.
If $l=2$, $[st, \overline{2s, 2st}]$ and $[st, s, 2st]$ converge.
If $l=3$, $[s+(4s^2+1)t,\overline{2s, 2s, 2(s+(4s^2+1)t)}]$, $[-2+t, \overline{1,-2,-1+2t}]$, $[-1+t, \overline{2, -1, 1+2t}]$, $[2+5t, \overline{-2,3,3+10t}]$ and $[1+5t, \overline{3,-2,3+10t}]$ converge.
Here, we only mentioned the results of convergence.
For more details, see \cref{appendix}.
Combining these results with \cref{11effective,12effective,13effective},
we complete the proof. 


\subsection{Proof of \cref{Maintheorem1}}\label{proofofMaintheorem1}

Since convergence of PICFs in \cref{Maintheorem1} has already shown in \cref{proof of Maintheorem1},
we  should only determine the signs of PICFs.
For a $P:=(b_1,a_1,\cdots,a_l)$, let $\lambda(P)_+$ be the eigenvalue of $E(P)$ such that $\abs{\lambda(P)_+}\geq 1$.
By \cite[Theorem 4.3]{BEJ}, we obtain that
\[
[b_1,\overline{a_1,...,a_l}]=\frac{\lambda(P)_+-E(P)_{22}}{E(P)_{21}}.
\]
Hence, it is sufficient to determine $\lambda(P)_+$ for each PICF in \cref{Maintheorem1}.
Easy calculations show that
\begin{align*}
&\lambda((t,2t))_+=t+\sgn(t)\sqrt{m_1(t)},\\
&\lambda((st,2s,2st))_+=2s^2t+1+\sgn(st)2s\sqrt{m_2(s,t)},\\
&\lambda((st,s,2st))_+=s^2t+\sgn(st)s\sqrt{m_2'(s,t)},\\
&\lambda((s+(4s^2+1)t,2s,2s,2(s+(4s^2+1)t)))_+\\
&\quad =16ts^4+4s^3+8ts^2+3s+t+\sgn(t)(4s^2+1)\sqrt{m_3(s,t)},\\
&\lambda((-2+t,1,-2,-1+2t))_+=-t-\sgn(t)\sqrt{m_3(0,t)},\\
&\lambda((-1+t,2,-1,1+2t))_+=-t-\sgn(t)\sqrt{m_3(0,t)},\\
&\lambda((2+5t,-2,3,3+10t))_+=-25t-7-\sgn(t)\sqrt{m_3(\pm1,t)},\\
&\lambda((1+5t,3,-2,3+10t))_+=-25t-7-\sgn(t)\sqrt{m_3(\pm1,t)},
\end{align*}
and we complete the proof (for explicit representations of $E(P)$ for each PICF in \cref{Maintheorem1}, see \cref{appendix}).


\subsection{Proof of \cref{Maintheorem2-2,Maintheorem2}}

Before we prove \cref{Maintheorem2-2,Maintheorem2},
we show the following lemma which gives the RPCF expansion of $m_2(s,t)$, $m'_2(s,t)$ and $m_3(s,t)$.

\begin{lemma}\label{npcf}
For $t<0$, we have
\begin{equation}\label{npcf2-1}
\sqrt{m_{2}(s,t)}=
\begin{cases}
[-st-1,\overline{1,2s-2,1,2(-st-1)}] & \text{if $s\geq 2$}, \\
[-t-1,\overline{2,-2t-2}] & \text{if $s=1, t\neq-1$},
\end{cases}
\end{equation}
and
\begin{equation}\label{npcf2-2}
\sqrt{m'_{2}(s,t)}=
\begin{cases}
[-st-1,\overline{1,s-2,1,2(-st-1)}] & \text{if $s\geq 3$}, \\
[-2t-1,\overline{2,2(-2t-1)}] & \text{if $s=2$}, \\
[-t-2,\overline{1,2(-t-2)}] & \text{if $s=1, t\neq-1, -2$} \\
\end{cases}
\end{equation}
If $t>0$ and $s<0$, then we have
\begin{equation}\label{npcf3}
\sqrt{m_3(s,t)}=[s+(4s^2+1)t-1,\overline{1,-2s-1,-2s-1,1,2(s+(4s^2+1)t-1)}].
\end{equation}
\end{lemma}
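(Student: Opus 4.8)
The plan is to verify each displayed expansion directly and then appeal to the uniqueness of the regular continued fraction expansion of an irrational number: if a periodic expression $[a_0,\overline{a_1,\dots,a_l}]$ converges to $\sqrt{m}$ and all of $a_1,\dots,a_l$ (hence all tail partial quotients) are positive integers, then it is the RPCF expansion of $\sqrt m$, and $a_0=\lfloor\sqrt m\rfloor$ follows automatically. Thus two checks suffice for each line: positivity of the period entries under the stated sign conditions, and equality of the value with the positive root $\sqrt m$. Since $m_2(s,t)$ and $m'_2(s,t)$ depend only on $s^2$, I may take $s>0$ there, which explains the organization into the ranges $s\geq2$, $s=1$ (resp.\ $s\geq3$, $s=2$, $s=1$); the polynomial $m_3(s,t)$, which is genuinely asymmetric in $s$, is handled for $s<0$.

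The positivity check is a short list of inequalities. For the first line of (\ref{npcf2-1}), for instance, one needs $2s-2\geq1$ and $2(-st-1)\geq1$; the former is equivalent to $s\geq2$ and the latter follows from $s\geq2$ together with $t\leq-1$, which is exactly the hypothesis $t<0$. Every other case is analogous, and in each case the sign restriction on $(s,t)$ is precisely what forces the corresponding period entries to be $\geq1$.

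For the value I would pass to the purely periodic tail. Writing $\beta:=[\overline{a_1,\dots,a_l}]$, so that $\sqrt m=a_0+1/\beta$, the number $\beta$ is the fixed point larger than $1$ of the Möbius transformation attached to $M([a_1,\dots,a_l])$; if $M([a_1,\dots,a_l])=\begin{bmatrix}P&P'\\Q&Q'\end{bmatrix}$, then $\beta$ is the root $>1$ of $Q\beta^2-(P-Q')\beta-P'=0$. I would compute this integer matrix symbolically in $s,t$ for each period word, read off the quadratic, and verify the identity $a_0+1/\beta=\sqrt m$ after substituting the relevant $m_l(s,t)$; concretely this amounts to checking that the discriminant of the quadratic equals $m$ times a perfect square and that the $\sqrt m$-term in $a_0+1/\beta$ has coefficient $1$. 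Equivalently, one may run the classical square-root algorithm from $(P_0,Q_0)=(0,1)$, iterating $P_{n+1}=a_nQ_n-P_n$ and $Q_{n+1}=(m-P_{n+1}^2)/Q_n$, to confirm that the complete quotients return to $(0,1)$ after the claimed number of steps and reproduce the stated $a_n$; or one may use the matrix $E$ together with the eigenvalue formula of \cite[Theorem 4.3]{BEJ}. Finally, to respect the convention that the overline denotes the \emph{minimal} period, I would check primitivity of each displayed word: it holds for generic $(s,t)$, but fails precisely at $t=-1$ in the first case of (\ref{npcf2-1}), where $(1,2s-2,1,2(-st-1))$ collapses to $(1,2s-2)$ repeated, and this degeneration is exactly the exceptional case isolated in \cref{Maintheorem2-2}(1).

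The main obstacle is purely computational bookkeeping. The period-four word $(1,2s-2,1,2(-st-1))$ of (\ref{npcf2-1}) and the period-five word $(1,-2s-1,-2s-1,1,2(s+(4s^2+1)t-1))$ of (\ref{npcf3}) produce products of four or five matrices $D(\cdot)$ with entries of fairly high degree in $s$ and $t$, and the delicate point is to confirm that the resulting fixed-point quadratic has discriminant matching $m_2(s,t)$ (resp.\ $m_3(s,t)$) exactly, with no stray factor. There is no conceptual difficulty, but the short-period endpoints $s=1$ (resp.\ $s=2$ and $s=1$), where the period shortens, do not arise as specializations of the generic computation and must be verified on their own.
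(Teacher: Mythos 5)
Your proposal is correct, but it takes a genuinely different route from the paper. The paper proves the lemma by the same procedure as \cref{proofofMaintheorem1}: for each tuple $P=(b_1,a_1,\dots,a_l)$ it computes the matrix $E(P)$, checks the three convergence conditions of \cite[Theorem 4.3]{BEJ} as in \cref{appendix}, and then identifies the value and its sign from the dominant eigenvalue via $[b_1,\overline{a_1,\dots,a_l}]=(\lambda(P)_+-E(P)_{22})/E(P)_{21}$; the only additional remark is the exclusion of $(s,t)=(1,-1)$ for $m_2$ and $(s,t)=(1,-1),(1,-2)$ for $m'_2$, where $m\le 0$. You instead observe that under the stated sign restrictions all period entries are positive integers, so classical regular-continued-fraction theory applies: convergence is automatic, the sign is automatic (everything in sight is positive), and uniqueness of the RCF expansion of an irrational number reduces the lemma to verifying the value, which you do via the fixed-point quadratic of the purely periodic tail. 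Computationally the two routes are nearly identical (your quadratic for the fixed point of $M([a_1,\dots,a_l])$ is the eigenvalue computation for $E(P)$ in disguise), but your justification is more elementary and self-contained, avoiding BEJ's convergence criterion altogether, and it directly establishes that these are the \emph{regular} expansions --- which is precisely how the lemma is consumed in the proof of \cref{Maintheorem2-2}, where \cref{fundamentalsolution} requires RPCFs. What the paper's route buys is uniformity: the identical matrix-and-eigenvalue procedure also handles the non-regular PICFs of \cref{Maintheorem1}, so the authors can dispatch the lemma in one sentence.

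Two small slips in your side remarks, neither of which affects the main argument. First, in the square-root-algorithm alternative, the complete quotients of $\sqrt{m}$ never return to $(P,Q)=(0,1)$, because $\sqrt{m}$ itself is not purely periodic; the correct termination test is $(P_{l+1},Q_{l+1})=(P_1,Q_1)$, equivalently $Q_l=1$. Second, your primitivity analysis is incomplete: the displayed period fails to be minimal not only at $t=-1$ in the first case of (\ref{npcf2-1}), but also at $(s,t)=(1,-2)$ in its second case, where the word $(2,-2t-2)$ becomes $(2,2)$, and at $(s,t)=(2,-1)$ in the second case of (\ref{npcf2-2}), where $(2,2(-2t-1))$ becomes $(2,2)$; in both of the latter cases $m=2$ and the minimal period is $1$. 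Since the paper's lemma makes no minimality claim and its proof does not address the issue, this concerns only your added refinement, but if you invoke the convention that the overline denotes the minimal period, you must list all three degenerations, not just the first.
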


This lemma is proved as same as \cref{proofofMaintheorem1}. 
Remark that We except the case $(s,t)=(1,-1)$ for $m_{2}$ and $(s,t)=(1,-1), (1,-2)$ for $m'_{2}$ since $m_{2}(1,-1)=0$, $m'_{2}(1,-1)=-1$, and $m'_{2}(1,-2)=0$.

\begin{proof}[Proof of \cref{Maintheorem2-2}]
Suppose that the case $m_2(s,t)$ in the Pell equation. 
We may assume that $s>0$ since $m_{2}(s,t)=m_{2}(-s,t)$.
To prove the if part, consider that $s\geq 2$ and $t=-1$.
Then, we obtain the RPCF expansion $\sqrt{m_2(s,-1)} = [s-1,\overline{1,2s-2}]$ by \cref{npcf}. 
Since the $1$st convergent of $[s-1,\overline{1,2s-2}]$ is $s$, 
a fundamental solution of $x^2-m_2(s,t)y^2=1$ is $(x,y)=(s,1)$ by \cref{fundamentalsolution}. 
However, this is not obtained from PICF expansions of $\sqrt{m_2(s,t)}=[st,\overline{2s,2st}]$ since the $1$st convergent of it is $(-2s^2+1)/(2s)$.

Now we will prove the only if part.
If $t>0$, PICF expansions $[st, \overline{2s,2st}]$ give the RPCF expansions of $\sqrt{m_{2}(s,t)}$.
Hence, fundamental solutions are the $1$st convergents of them by \cref{fundamentalsolution}. 
If $t<0$, 
the RPCF expansions of $\sqrt{m_{2}(s,t)}$ are given in \cref{npcf}. 
Applying \cref{fundamentalsolution} to (\ref{npcf2-1}),
fundamental solutions are the $3$rd convergents of (\ref{npcf2-1}) if $s\geq 2, t\neq -1$ and the $1$st convergents of (\ref{npcf2-1}) if $s=1, t\neq -1$ or $s\geq 2, t=-1$.
In particular, if $s\geq 2, t\neq -1$ and $s=1, t\neq -1$, 
we can check these fundamental solutions coincide with the $1$st convergents of $\sqrt{m_{2}(s,t)}=[-st, \overline{-2s,-2st}]$ up to signs.

Next, suppose that $m_3(s,t)$ in the Pell equation.
The proof of the if part is clear since the $2$nd convergent of $\sqrt{2}=[2,\overline{-2,3,3}]$ does not coincide with the $0$th convergent of $\sqrt{2}=[1,\overline{2}]$.  
Hence it is sufficient to prove the only if part.
\begin{enumerate}
\item Consider the case $\sgn(t)\sqrt{m_3(s,t)}=[s+(4s^2+1)t, \overline{2s,2s,2(s+(4s^2+1)t)}]$ in \cref{Maintheorem1}.
We may assume that $s>0,t>0$ or $s>0, t<0$ since $m_3(s,t)=m_3(-s,-t)$.
If $s>0, t>0$, this is the RPCF expansion of $\sqrt{m_3}(s,t)$ and fundamental solutions are the $2$nd convergents.
If $s<0, t>0$, the $2$nd convergents of $[s+(4s^2+1)t, \overline{2s,2s,2(s+(4s^2+1)t)}]$ coincide with the $4$th convergents of (\ref{npcf3}) in \cref{npcf} up to signs.
\item Consider the case 
\begin{align*}
\sgn(t)\sqrt{m_3(0,t)} = [-2+ t, \overline{1,-2,-1+ 2t}] = [-1+ t, \overline{2,-1,1+ 2t}].  
\end{align*}
Then, both the $2$nd convergents of $[-2+ t, \overline{1,-2,-1+ 2t}]$ and $[-1+ t, \overline{2,-1,1+ 2t}]$ are $-t/(-1)$ and this is a fundamental solution of $x^2-m_3(0,t)y^2=-1$.
\item Consider the case 
\begin{align*}
\sgn(t)\sqrt{m_3(\pm1,t)} = [2+5t,\overline{-2,3,3+10t}] = [1+5t,\overline{3,-2,3+10t}].
\end{align*}
If $t\neq 0$, the $2$nd convergents of $[2+5t,\overline{-2,3,3+10t}]$ and $[1+5t, \overline{3,-2,3+10t}]$ coincide with the fourth convergents of (\ref{npcf3}) up to signs. 
\end{enumerate}

Finally, suppose that the case $m'_2(s,t)$ in the Pell equation.
We may assume that $s>0$ since $m'_{2}(s,t)=m'_{2}(-s,t)$.
If $t>0$, PICF expansions $[st, \overline{s,2st}]$ give the RPCF expansions of $\sqrt{m'_{2}(s,t)}$. 
Hence, fundamental solutions are the $1$st convergents of them by \cref{fundamentalsolution}. 
If $t<0$, 
the RPCF expansions of $\sqrt{m'_{2}(s,t)}$ are given in \cref{npcf}.
Applying \cref{fundamentalsolution} to (\ref{npcf2-2}), 
fundamental solutions are the $3$rd convergents of (\ref{npcf2-2}) if $s\geq 3$ and the $1$st convergents of (\ref{npcf2-2}) if $s=2$ and $s=1$, $t\neq -1,-2$.
For each case, we can check these fundamental solutions coincide with the $2$nd convergents of $\sqrt{m'_{2}(s,t)}=[-st, \overline{-s, -2st}]$ up to signs. 
\end{proof}

We remark that we obtain \cref{Maintheorem2} by writing down the convergents explicitly in the proof of \cref{Maintheorem2-2}. 


\section{$\Z[X_{n-1}]$-PCF expansions of $X_n$ and the generalized Pell equation}\label{Z2extension}

In this section,
we consider PCF expansions of certain algebraic integers related to the $\Z_2$-extension over $\Q$.
For each non-negative integer $n$, set $X_n=2\cos(\pi/2^{n+1})$.
For example, 
\[
X_0=0, X_1=\sqrt{2}, X_2=\sqrt{2+\sqrt{2}},\dots.  
\]
Then,
$\mathbb{B}_n:=\mathbb{Q}(X_n)$ is the Galois extension over $\Q$ with $\Gal(\mathbb{B}_n/\Q)\cong \Z/2^n\Z$ and the ring of integers of $\mathbb{B}_n$ is $\Z[X_n]$ for $n\geq 0$. 
Note that $\cup_{n\geq 0}\mathbb{B}_n$ is the $\mathbb{Z}_2$-extension over $\mathbb{Q}$.


\subsection{$(0,3)$-type $\Z[X_{n-1}]$-PCF expansion of $X_n$}

For $a_i\in\Z[X_{n-1}]$,
let $\{a_i\}$ be a sequence satisfying the periodic condition, that is, 
$a_k=a_{l+k}$ for some $l \in \Z_{\geq 1}$, $N \in \Z_{\geq 0}$ and all $k \geq N$.
In a similar manner to a PICF,
we call $[a_0,\dots,a_{N-1},\overline{a_{N},\dots,a_{N+l-1}}]$ a $(N,l)$-type $\Z[X_{n-1}]$-PCF.

Block-Elkies-Jordan asked the following question;
\begin{question}{\cite[Problem 1.1]{BEJ}}\label{pcfproblem}
Find $(N,l)$-type $\Z[X_{n-1}]$-PCF expansions of $X_{n}$ for each $n\geq 0$, $N\geq 0$ and $l\geq 1$.
\end{question}

In \cite{BEJ},
Block-Elkies-Jordan gave partial answers for \cref{pcfproblem}.
More precisely,
for $n=1,2$,
they determined the all $(0,1)$, $(0,2)$, $(0,3)$, $(1,1)$, $(1,2)$ and $(2,1)$-types $\Z[X_{n-1}]$-PCF expansions of $X_{n}$.
Moreover, 
they showed that there are no $(0,1)$, $(0,2)$ and $(1,1)$-types $\Z[X_{n-1}]$-PCF expansions of $X_n$ for all $n\geq 2$.
In comparison to this,
Yoshizaki found $(1,2)$-type $\Z[X_{n-1}]$-PCF expansions of $X_n$ for all $n\geq 1$ by using a different approach.
\begin{theorem}{\cite[Theorem 3.4]{Yoshizaki}}
For all $n\geq 1$,
we obtain
\begin{align*}
X_n=\left[1,\overline{\frac{2}{1+X_{n-1}}, 2}\right].
\end{align*}
\end{theorem}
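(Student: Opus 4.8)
The plan is to verify this identity inside the matrix framework of \cite{BEJ}, exactly as in the proof of \cref{proofofMaintheorem1}, so that the entire statement collapses to the single algebraic relation
\begin{align*}
X_n^2 = 2 + X_{n-1},
\end{align*}
which is the half-angle identity $2\cos(\theta/2)=\sqrt{2+2\cos\theta}$ evaluated at $\theta=\pi/2^n$. Writing $a:=\tfrac{2}{1+X_{n-1}}$, the first step is to record that $a\in\Z[X_{n-1}]$, i.e.\ that $1+X_{n-1}$ divides $2$ in the ring of integers $\Z[X_{n-1}]$, so that $[1,\overline{a,2}]$ is a genuine $(1,2)$-type $\Z[X_{n-1}]$-PCF; for instance $a=2$ when $n=1$ and $a=2X_1-2$ when $n=2$.

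Next I would set $P:=(1,a,2)$ and compute $E(P)$ from the explicit expression for $E((y_1,x_1,x_2))$ obtained earlier, getting
\begin{align*}
E(P)=\begin{bmatrix} a+1 & a+2 \\ a & a+1 \end{bmatrix},
\end{align*}
which has determinant $1$ and trace $2a+2$, hence eigenvalues $\lambda_\pm=(a+1)\pm\sqrt{a(a+2)}$. Since $X_{n-1}=2\cos(\pi/2^n)\in[0,2)$ gives $a\in(\tfrac23,2]$, we have $a(a+2)>0$, so $E(P)$ is hyperbolic and $\lambda(P)_+=(a+1)+\sqrt{a(a+2)}$ satisfies $\abs{\lambda(P)_+}>1$. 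The eigenvalue formula of \cite[Theorem 4.3]{BEJ} then yields
\begin{align*}
[1,\overline{a,2}]=\frac{\lambda(P)_+-E(P)_{22}}{E(P)_{21}}=\frac{\sqrt{a(a+2)}}{a}=\sqrt{\frac{a+2}{a}}=\sqrt{2+X_{n-1}}=X_n,
\end{align*}
where the penultimate equality uses $\tfrac{a+2}{a}=1+\tfrac{2}{a}=1+(1+X_{n-1})$, and the final one uses the relation above together with $X_n>0$ to fix the sign.

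The matrix computation and the simplification of $(a+2)/a$ are routine; the only genuine obstacle I anticipate is checking that the hypotheses of \cite[Theorem 4.3]{BEJ} are met over $\Z[X_{n-1}]$, namely that the continued fraction actually converges and that the hyperbolic eigenvalue $\lambda(P)_+$ governs the limit, and then selecting the correct sign. Both are controlled here by $a>0$ (forcing hyperbolicity) and by $X_n>0$ (selecting $+\sqrt{2+X_{n-1}}$). If one prefers a self-contained route avoiding \cite[Theorem 4.3]{BEJ}, I would instead set $u:=\tfrac{X_n+1}{1+X_{n-1}}$ and $v:=X_n+1$, check directly via $X_n^2=2+X_{n-1}$ that $u=a+1/v$ and $v=2+1/u$, so that $X_n=1+1/u$ is a fixed point of the period-two map, and then supply a convergence estimate through the determinant-one matrices $D(a)D(2)$; this reproduces the same identity from the side of the value rather than the matrix.
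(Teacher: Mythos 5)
Note first that the paper itself contains no proof of this statement: it is imported verbatim from \cite[Theorem 3.4]{Yoshizaki}, and the surrounding text explicitly says Yoshizaki obtained it ``by using a different approach'' from the PCF-variety method. So your argument cannot match ``the paper's proof''; what it does instead is prove the cited theorem with exactly the machinery this paper uses for its \emph{own} results (\S\ref{proofofMaintheorem1} and \cref{pcfofXn}): exhibit a point of the PCF variety and evaluate the limit via the eigenvalue formula of \cite[Theorem 4.3]{BEJ}. On that route your computations are correct. With $(y_1,x_1,x_2)=(1,a,2)$, $a=2/(1+X_{n-1})$, the paper's displayed formula for $E((y_1,x_1,x_2))$ gives $E_{11}=E_{22}=a+1$, $E_{12}=a+2$, $E_{21}=a$, hence $\det E(P)=1$ and the fixed-point relation $E_{12}/E_{21}=(a+2)/a=1+(1+X_{n-1})=2+X_{n-1}=X_n^2$, so $P$ lies on $V(X_n)_{1,2}(\Z[X_{n-1}])$; since $X_{n-1}\in[0,2)$ forces $a\in(2/3,2]$, one has $\Tr(E(P))^2=4(a+1)^2>4$ and $M([a,2])_{21}=2\neq 0$, $M([2,a])_{21}=a\neq 0$, so the convergence conditions recalled in \cref{appendix} hold, and
\begin{align*}
[1,\overline{a,2}]=\frac{\lambda(P)_+-E(P)_{22}}{E(P)_{21}}=\frac{\sqrt{a(a+2)}}{a}=\sqrt{2+X_{n-1}}=X_n,
\end{align*}
with the sign fixed by positivity. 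This is precisely parallel to the paper's proofs of \cref{Maintheorem1} and \cref{pcfofXn}, and it buys a uniform, computation-free-of-recursion verification; Yoshizaki's original argument proceeds differently.

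The one genuine gap is the integrality claim $a=2/(1+X_{n-1})\in\Z[X_{n-1}]$, which you merely ``record'' and check for $n=1,2$. This is part of the content of the theorem --- without it, $[1,\overline{a,2}]$ is not a $(1,2)$-type $\Z[X_{n-1}]$-PCF at all --- and it does not follow from anything else you wrote. It is true because $1+X_{n-1}$ is a \emph{unit} of $\Z[X_{n-1}]$: argue by induction using
\begin{align*}
(1+X_{n-1})(X_{n-1}-1)=X_{n-1}^2-1=(2+X_{n-2})-1=1+X_{n-2},
\end{align*}
with base case $1+X_0=1$; the right-hand side is a unit by induction (its inverse lies in $\Z[X_{n-2}]\subset\Z[X_{n-1}]$), so $1+X_{n-1}$ divides a unit and is itself a unit. (Alternatively, $1+X_{n-1}=\sin(3\pi/2^{n+1})/\sin(\pi/2^{n+1})$ is a cyclotomic unit since $\gcd(3,2^{n+1})=1$.) Once this line is supplied, your main route is complete. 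Your alternative ``self-contained'' sketch at the end has the same integrality gap and, in addition, only gestures at the required convergence estimate, so the BEJ route is the one to finish.
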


In this paper,
we find $(0,3)$-type $\mathbb{Z}[X_{n-1}]$-PCF expansions of $X_n$ for all $n\geq 1$ by using PCF varieties.
Before we state our result and prove it, 
we prepare some notions and facts.
Let $\Z[X_n]^*$ be the group of units of $\Z[X_n]$ and $\tau_n$ is the generator of ${\rm Gal}(\mathbb{B}_n/\mathbb{B}_{n-1})\cong \mathbb{Z}/2\mathbb{Z}$.
We define the relative norm of the quadratic extensions $\mathbb{B}_n/\mathbb{B}_{n-1}$ by $N_{n/n-1}:\mathbb{B}_n\to \mathbb{B}_{n-1};x\mapsto x\tau_n (x)$.
For $x \in \mathbb{Z}[X_n]$, there exists a unique pair $(a,b) \in \mathbb{Z}[X_{n-1}]^2$ such that $x=a+X_nb$,
and the relative norm is of the form $N_{n/n-1}(x)=a^2-X_n^2b^2$.
Set
\[
  \eta_n=1+\sum_{k=1}^{2^n-1}2\cos\left(\frac{k\pi}{2^{n+1}}\right).
\]
Note that $\eta_n$ satisfies $N_{n/{n-1}}(\eta_n)=-1$ (cf.\ \cite[(6.1)]{MO2020}). 
In particular, $\eta_n \in \Z[X_n]^*$ and we call it Horie unit or Weber's normal unit (cf.\ {\cite{Horie2005,MO2016}}).
Our result is the following.
\begin{theorem}\label{pcfofXn}
For all non-negative integers $n$ and $\sigma\in\Gal(\mathbb{B}_n/\Q)$, 
we have
\[
    \sigma(X_n)=\left[\overline{\sigma\left(\frac{(\eta_n-\eta_{n-1})X_n-1}{\eta_{n-1}}\right),\sigma(\eta_{n-1}),\sigma\left(\frac{(\eta_n-\eta_{n-1})/X_n-1}{\eta_{n-1}}\right)}\right]
\]
up to signs.
\end{theorem}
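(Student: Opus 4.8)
The plan is to realize the asserted expansion as a $\Z[X_{n-1}]$-point of a PCF variety over the base field $\mathbb{B}_{n-1}$ and to read off its value from the convergence criterion, exactly in the style of the proofs of \cref{Maintheorem1} and \cref{npcf}. The key observation is that $X_n^2=X_{n-1}+2$, so over $\mathbb{B}_{n-1}$ the element $X_n$ is the square root of $\alpha:=X_{n-1}+2$ and the apparatus of \S\ref{PCFvariety} applies with $\Q,\Z,\sqrt m$ replaced by $\mathbb{B}_{n-1}, \Z[X_{n-1}], X_n$. Writing $\beta_n:=(\eta_n-\eta_{n-1})/X_n$, the candidate coefficients are $x_1=(\alpha\beta_n-1)/\eta_{n-1}$, $x_2=\eta_{n-1}$, $x_3=(\beta_n-1)/\eta_{n-1}$, which are exactly the entries displayed in the theorem. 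I would first treat $\sigma=\mathrm{id}$ and recover general $\sigma$ at the end by transport of structure.

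First I would prove the integrality lemma $\Tr_{n/n-1}(\eta_n)=2\eta_{n-1}$, equivalently $\eta_n=\eta_{n-1}+X_n\beta_n$ with $\beta_n\in\Z[X_{n-1}]$. Writing $\zeta=e^{\pi i/2^{n+1}}$ so that $2\cos(k\pi/2^{n+1})=\zeta^k+\zeta^{-k}$, and noting that the nontrivial element of $\Gal(\mathbb{B}_n/\mathbb{B}_{n-1})$ acts by $\zeta\mapsto\zeta^{1+2^{n+1}}$, one checks that the even-index summands $k=2j$ of $\eta_n$ lie in $\mathbb{B}_{n-1}$ and reassemble (together with the constant term) into $\eta_{n-1}$, doubling under the trace, while each odd-index summand is sent to its negative and so has relative trace $0$. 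This yields $\Tr_{n/n-1}(\eta_n)=2\eta_{n-1}$. Since $\eta_{n-1}\in\Z[X_{n-1}]^*$, division by it is harmless, and combined with $\alpha\beta_n=X_n^2\beta_n=(\eta_n-\eta_{n-1})X_n$ and $\beta_n=(\eta_n-\eta_{n-1})/X_n$ this shows $x_1,x_2,x_3\in\Z[X_{n-1}]$.

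Next I would verify membership in the variety by computing $E((x_1,x_2,x_3))=D(x_1)D(x_2)D(x_3)$ directly. A short calculation gives $E_{12}=\alpha\beta_n$, $E_{21}=\beta_n$, $E_{22}=\eta_{n-1}$ and $E_{11}=(\alpha\beta_n^2-1)/\eta_{n-1}$. The second defining equation, which forces $E_{12}/E_{21}=X_n^2$, is then immediate since $E_{12}/E_{21}=\alpha\beta_n/\beta_n=\alpha$. The equation $E_{11}=E_{22}$ is equivalent to $(X_n\beta_n)^2-\eta_{n-1}^2=1$, i.e. to $(\eta_n-\eta_{n-1})^2-\eta_{n-1}^2=1$, i.e. to $\eta_n^2-2\eta_n\eta_{n-1}=1$; using $\tau_n(\eta_n)=\Tr_{n/n-1}(\eta_n)-\eta_n=2\eta_{n-1}-\eta_n$ from the previous step, this is exactly the defining property $N_{n/n-1}(\eta_n)=\eta_n\tau_n(\eta_n)=-1$ of the Horie unit. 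Thus both variety equations collapse precisely onto the trace and norm relations of $\eta_n$.

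Finally, for convergence and the sign, observe that on the variety $E$ has equal diagonal $\eta_{n-1}$ and determinant $-1$, hence eigenvalues $\eta_{n-1}\pm\sqrt{\eta_{n-1}^2+1}$, the dominant one having absolute value $>1$ because $\eta_{n-1}\neq0$. The same holds after applying any $\sigma\in\Gal(\mathbb{B}_n/\Q)$: $\mathbb{B}_n$ is totally real and $\sigma(\eta_{n-1})\neq0$ as $\eta_{n-1}$ is a unit, so $\sigma(E)$ has eigenvalues $\sigma(\eta_{n-1})\pm\sqrt{\sigma(\eta_{n-1})^2+1}$ of distinct absolute value. Hence \cite[Theorem 4.3]{BEJ} applies to every conjugate, so $[\overline{\sigma(x_1),\sigma(x_2),\sigma(x_3)}]$ converges to a fixed point of $\sigma(E)$ squaring to $\sigma(\alpha)=\sigma(X_n)^2$, i.e. to $\pm\sigma(X_n)$, which is the assertion ``up to signs''; that the $\sigma$-image point lies on the corresponding variety is automatic, since applying $\sigma$ replaces $\alpha=X_{n-1}+2$ by $\sigma(X_{n-1})+2=\sigma(X_n)^2$. (For $n\ge2$ the point is non-degenerate, as one checks $\beta_n\neq1$; the case $n=1$ degenerates and is verified directly.) I expect the main obstacle to be the integrality lemma $\Tr_{n/n-1}(\eta_n)=2\eta_{n-1}$ together with the recognition that the two variety equations reduce exactly to the norm and trace relations of the Horie unit; once this is in place the matrix computation is routine and the eigenvalue bound gives convergence uniformly in $\sigma$ for free.
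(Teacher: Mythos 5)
Your proposal follows essentially the same route as the paper's own proof: both realize the displayed coefficients as a point of the $(0,3)$-type PCF variety $V(X_n)_{0,3}$ over $\Z[X_{n-1}]$, both reduce the variety equations to the norm relation $N_{n/n-1}(\eta_n)=-1$ of the Horie unit (the paper cites this from \cite{MO2020}) together with the even/odd-index decomposition of $\eta_n$ (your trace identity $\tau_n(\eta_n)=2\eta_{n-1}-\eta_n$ is exactly the paper's computation $\eta_n=\eta_{n-1}+X_n\sum_{2\nmid k}\epsilon_{k,n}$), and both conclude convergence and the value via \cite[Theorem 4.3]{BEJ}. Your reorganization (direct verification of $E_{11}=E_{22}$ and $E_{12}=X_n^2E_{21}$, rather than the paper's elimination of $x_1$ into the norm-form equation) and your identification of the eigenvalues $\eta_{n-1}\pm\sqrt{\eta_{n-1}^2+1}$, which are in fact $\eta_n$ and $\tau_n(\eta_n)$, matching the paper's sign criterion $\abs{\sigma(\eta_n)}>1$, are cosmetic differences only.

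There is, however, one genuine gap, and it sits precisely at the step you yourself flag as the crux. You assert that the trace identity is \emph{equivalent} to $\beta_n=(\eta_n-\eta_{n-1})/X_n\in\Z[X_{n-1}]$. It is not: the trace identity only shows $\tau_n(\beta_n)=\beta_n$, i.e.\ $\beta_n\in\mathbb{B}_{n-1}$, and since $X_n$ is not a unit (it generates the prime above $2$), dividing the algebraic integer $\eta_n-\eta_{n-1}$ by $X_n$ does not stay integral for free. Integrality is genuinely part of the content here, since the theorem asserts a $\Z[X_{n-1}]$-PCF expansion. The paper closes this by showing each $\epsilon_{k,n}=2\cos(k\pi/2^{n+1})/X_n$ ($k$ odd) is a \emph{unit} of $\Z[X_{n-1}]$, using that $2$ is totally ramified in $\mathbb{B}_n/\Q$ so all these cosines generate the same prime ideal, together with $\tau_n$-invariance; alternatively, a short valuation argument repairs your version ($v_{\mathfrak{P}}(\beta_n)\ge -1$ from $X_n\beta_n\in\Z[X_n]$, and $v_{\mathfrak{P}}(\beta_n)$ is even because $\beta_n\in\mathbb{B}_{n-1}$ and $\mathfrak{P}$ is ramified over $\mathbb{B}_{n-1}$, hence $v_{\mathfrak{P}}(\beta_n)\ge0$, with integrality away from $2$ being clear). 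Finally, note that \cite[Theorem 4.3]{BEJ} also requires the cyclic-shift condition recalled in \cref{appendix}; your eigenvalue separation does not by itself discharge it, though the paper's proof of \cref{pcfofXn} is equally brief on this point.
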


\begin{proof}
In a similar manner to \cref{PCFvariety},
for $(a_1,a_2,a_3)\in V(X_n)_{0,3}(\Z[X_{n-1}])$, 
we obtain $(0,3)$-type $\Z[X_{n-1}]$-PCF of $X_{n-1}$ if $[\overline{a_1,a_2,a_3}]$ converges.
Hence, 
to prove \cref{pcfofXn},
we find elements of $V(X_n)_{0,3}(\Z[X_{n-1}])$ and check the convergence for $n\geq 1$.
We see that $E((x_1,x_2,x_3))$ is
\[
\begin{bmatrix}
    x_1x_2x_3+x_1+x_3 & x_1x_2+1 \\
    x_2x_3+1 & x_2 \\
\end{bmatrix}
\].

Hence, $V(X_n)_{0,3}$ is given by
\begin{equation}\label{zerothree}
\begin{cases}
x_2-x_1x_2x_3-x_1-x_3=0, \\
x_1x_2+1=X_n^2(x_2x_3+1).
\end{cases}
\end{equation}
By eliminating $x_1$ from (\ref{zerothree}), 
we obtain
\begin{align}\label{norm}
x_2^2-X_n^2(x_2x_3+1)^2=-1.
\end{align}
Here, we take
\begin{equation}\label{solution}
\begin{split}
    x_2&=\eta_{n-1},\\
    x_3&=\left.\left(-1+\sum_{\substack{1\leq k \leq 2^n-1,\\ 2\nmid k}}\epsilon_{k,n}\right)\right/ \eta_{n-1}
\end{split}
\end{equation}
where $\epsilon_{k,n}=(2\cos(k\pi/2^{n+1}))/(2\cos(\pi/2^{n+1}))$.
Then, 
we obtain $\eta_n=x_2+X_n(x_2x_3+1)$ since 
\begin{equation*}
\begin{split}
\eta_n&=1+\sum_{k=1}^{2^n-1}2\cos(k\pi/2^{n+1}) \\
&=\eta_{n-1}+\sum_{\substack{1\leq k \leq 2^n-1,\\ 2\nmid k}}2\cos(k\pi/2^{n+1}) \\
&=\eta_{n-1}+2\cos(\pi/2^{n+1})\sum_{\substack{1\leq k \leq 2^n-1,\\ 2\nmid k}}\epsilon_{k,n}.
\end{split}
\end{equation*}
Combining with $N_{n/{n-1}}(\eta_n)=-1$,
we see that (\ref{solution}) are solutions of (\ref{norm}).
We also obtain $\epsilon_{k,n}\in \Z[X_n]^*$ since $2\cos(k\pi/2^{n+1})$ are prime elements on $2$ in $\mathbb{Z}[X_n]$ for each odd integer $k$ and $2$ ramifies completely in $\mathbb{B}_n/\mathbb{Q}$ for $n\geq 1$.
From $\tau_n(\epsilon_{k,n})=\epsilon_{k,n}$, 
we see that $\epsilon_{k,n}\in \Z[X_{n}]^{*}\cap \mathbb{B}_{n-1}=\Z[X_{n-1}]^{*}$ for each odd integer $k$ and $n\in\Z_{\geq 1}$.
Hence, $x_2,x_3\in\Z[X_{n-1}]$.

By the definition of $X_n$ and $\eta_n$,
we obtain
\begin{equation*}
\sum_{\substack{1\leq k \leq 2^n-1,\\ 2\nmid k}}\epsilon_{k,n}= \frac{\eta_n-\eta_{n-1}}{X_n}
\end{equation*}
and 
\begin{align*}
x_1=\frac{X_n^2(x_2x_3+1)-1}{x_2}=\frac{X_n(\eta_n-\eta_{n-1})-1}{\eta_{n-1}}.
\end{align*}
Hence, we obtain the elements of $V(X_n)_{0,3}(\Z[X_{n-1}])$ for $n\geq 1$.

We can also check the convergence of the PCFs and determine the values by using \cite[Theorem 4.3]{BEJ}.
Indeed, the convergence is checked as same as \cref{appendix}.
Moreover,
for each $\sigma\in\Gal(\mathbb{B}_n/\Q)$, 
we obtain
\[
\left[\overline{\sigma(\frac{(\eta_n-\eta_{n-1})X_n-1}{\eta_{n-1}}),\sigma(\eta_{n-1}),\sigma(\frac{(\eta_n-\eta_{n-1})/X_n-1}{\eta_{n-1}})}\right]
=
\begin{cases}
    \sigma(X_n) & (|\sigma(\eta_n)|>1) \\
    -\sigma(X_n) & (\text{otherwise}).
\end{cases}
\]

\end{proof}


\subsection{An application to the generalized Pell equation}

From \cref{pcfofXn}, we obtain $(1,3)$-type $\mathbb{Z}[X_{n-1}]$-PCF expansions of $X_n$, that is,
\begin{equation}\label{13pcfofXn}
  \left[\frac{(\eta_n-\eta_{n-1})X_n-1}{\eta_{n-1}},\overline{\eta_{n-1},\frac{(\eta_n-\eta_{n-1})/X_n-1}{\eta_{n-1}},\frac{(\eta_n-\eta_{n-1})X_n-1}{\eta_{n-1}}}\right].
\end{equation}
We will show an application to solutions of the generalized Pell equation $x^2-X_n^2y^2=\pm 1$ in $\Z[X_{n-1}]$. 
When $n=1$, the generalized Pell equation coincides with the Pell equation $x^2-2y^2=\pm 1$.
In what follows,
we will consider the solutions of the generalized Pell equation in $\Z[X_{n-1}]$.
Set
\[
W_n\coloneqq\{(u,v)\in\Z[X_{n-1}]^2\mid \text{$u^2-X_n^2v^2=1$ or $u^2-X_n^2v^2=-1$}\}
\]
and 
\[
RE_n=\{\epsilon \in \mathbb{Z}[X_n] \mid N_{n/n-1}(\epsilon)\in \{\pm1\}\}\subset \Z[X_n]^*.   
\]
$RE_n$ is a subgroup of $\Z[X_n]^{*}$ and called the group of relative units.
By Dirichlet's unit theorem,
we obtain the isomorphism as a $\Z$-module
\begin{align*}
RE_n\cong \Z/2\Z\times \Z^{2^{n-1}}.
\end{align*}
In a similar manner to the Pell equation,
there is a bijection
\begin{align*}
W_n\cong RE_n; \ (x,y)\mapsto x+X_ny.
\end{align*}
for each $n\geq 1$.
The set in $W_n$ corresponding to generators of $RE_n$ is called a fundamental solution of the generalized Pell equation.

$RE_n$ is also deeply related to the Weber conjecture.
Let $h_n$ be the class number of $\mathbb{B}_n$ for $n\in\Z_{\geq 1}$.

\begin{conjecture}[{cf.\ \cite[Section 2]{Miller2014}}]\label{conjecture}
For each $n\in\Z_{\geq 1}$,
$h_n=1$. 
\end{conjecture}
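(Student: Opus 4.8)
The plan is to recognize that \cref{conjecture} is the Weber class number conjecture for the cyclotomic $\Z_2$-extension of $\Q$, and that a complete proof lies well beyond the reach of the elementary continued-fraction techniques developed above; what follows is therefore the structure of an attack rather than a finished argument. The first reduction I would make is to separate the $2$-part of $h_n$ from its odd part. Since $\cup_{n\geq 0}\B_n$ is the cyclotomic $\Z_2$-extension of $\Q$, Iwasawa's theorem expresses the $2$-adic valuation of $h_n$ in terms of the invariants $\mu,\lambda,\nu$; the theorem of Ferrero--Washington gives $\mu=0$, and for the base field $\Q$ one has in addition $\lambda=\nu=0$, so that the $2$-part of $h_n$ is trivial for every $n$. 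This disposes of the prime $2$ unconditionally and reduces \cref{conjecture} to the assertion that no odd prime $\ell$ divides $h_n$.

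For the odd part I would exploit the fact that each $\B_n$ is totally real, being the maximal real subfield $\Q(\zeta_{2^{n+2}})^+$ of a $2$-power cyclotomic field. The classical index formula for real abelian fields then equates $h_n$, up to a power of $2$ already handled above, with the index $[\Z[X_n]^* : C_n]$ of the subgroup $C_n$ of cyclotomic units inside the full unit group. The Horie unit $\eta_n$ and, more generally, the relative units $RE_n$ introduced in this section are exactly the cyclotomic data entering $C_n$: the norm relation $N_{n/n-1}(\eta_n)=-1$ and the tower structure of $RE_n$ supply explicit generators to work with. The goal would then be to show that these cyclotomic units, organized compatibly along the tower $\{\B_n\}$, exhaust the unit group, i.e.\ that the index is a power of $2$, whence $h_n=1$.

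The main obstacle is precisely the control of odd primes $\ell$ as $n$ grows. Ruling out a fixed $\ell$ amounts to understanding the $\ell$-adic behaviour of the relevant $L$-values, equivalently the splitting of $\ell$ and the associated Iwasawa module, along the entire tower, and the available methods of the type developed by Horie and by Morisawa--Okazaki (cf.\ \cite{Horie2005,MO2016,MO2020}) succeed only for primes $\ell$ in restricted congruence or decomposition classes, or for $n$ below an explicit bound. A uniform argument covering all odd $\ell$ and all $n$ simultaneously is what is missing, and I do not expect the machinery of this paper to supply it; its role is rather to furnish the explicit units $\eta_n$, via the $(0,3)$-type expansions of \cref{pcfofXn}, that feed into such an index computation. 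I would therefore present \cref{conjecture} as motivation, proving at most the reduction to the odd part together with those cases of $\ell$ accessible by current unit-index techniques.
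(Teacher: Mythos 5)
You have correctly recognized that \cref{conjecture} is Weber's class number problem for the cyclotomic $\Z_2$-extension of $\Q$, and that it is an \emph{open} problem: the paper offers no proof, stating it only as a conjecture (with a pointer to \cite{Miller2014}) and invoking it as a hypothesis in \cref{fundsolofXn}. There is therefore no proof in the paper to compare against, and your decision to present the statement as motivation, proving at most partial reductions, exactly matches the paper's treatment; claiming a complete proof here would be claiming a major breakthrough, and you rightly do not.

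A few corrections to your framing, since they affect what the honest partial results are. First, the triviality of the $2$-part of $h_n$ is older and more elementary than your route through Iwasawa invariants suggests: it is Weber's classical theorem that $h_n$ is odd, which follows from genus theory because $\B_n/\Q$ is a cyclic $2$-extension in which only the prime $2$ ramifies, and it needs neither Ferrero--Washington nor $\lambda=\nu=0$ (though those are also known for this tower). Second, the unit-index formula you invoke is exact here, not ``up to a power of $2$'': since $\B_n=\Q(\zeta_{2^{n+2}})^+$ is the maximal real subfield of a prime-power cyclotomic field, one has $(\Z[X_n]^*:C_n)=h_n$ on the nose (\cite[Theorem 8.2]{Washington}), a fact the paper itself uses in proving \cref{fundsolofXn}. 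Third, the relative units $RE_n$ are not ``exactly the cyclotomic data entering $C_n$'': $RE_n$ is cut out by the relative norm condition, and the precise link to the conjecture is the index computation $(RE_n^+:A_n^+)=h_n/h_{n-1}$ with $A_n^+=RE_n^+\cap C_n$, carried out in the proof of \cref{fundsolofXn} following \cite{Yoshizaki} and \cite{MO2020}. Your description of the state of the art is right in spirit --- results of Horie, Morisawa--Okazaki, and Fukuda--Komatsu exclude prime divisors in restricted congruence or decomposition classes or below large numerical bounds, and no uniform argument over all odd $\ell$ and all $n$ is known --- which is precisely why the paper states \cref{conjecture} as a hypothesis rather than a theorem.
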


Under \cref{conjecture},
we find that the $2$nd convergent of (\ref{13pcfofXn}) gives a fundamental solution of the generalized Pell equation $x^2-X_n^2y^2=\pm 1$ for each $n$.
This is an analogue for \cref{fundamentalsolution}.
Let $(p_2,q_2)$ be the $2$nd convergent of (\ref{13pcfofXn}) and set
\[
A_n=\langle-1, \sigma(p_2+X_nq_2)\mid \sigma\in\Gal(\mathbb{B}_n/\Q)\rangle_{\Z}.    
\]

\begin{proposition}\label{fundsolofXn}
Under \cref{conjecture}, 
we obtain $RE_n=A_n$ for each $n\in\Z_{\geq 1}$.
\end{proposition}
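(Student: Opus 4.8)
The plan is to establish the two inclusions $A_n\subseteq RE_n$ and $RE_n\subseteq A_n$ separately, after first identifying the generator $p_2+X_nq_2$ of $A_n$ with the Horie unit $\eta_n$. For this identification I would compute the convergents of (\ref{13pcfofXn}) directly. Writing $(x_1,x_2,x_3)$ for the coordinates found in the proof of \cref{pcfofXn} (so that (\ref{13pcfofXn}) reads $[x_1,\overline{x_2,x_3,x_1}]$ with $x_2=\eta_{n-1}$), the recursion defining the convergents gives $p_2=x_1x_2x_3+x_1+x_3$ and $q_2=x_2x_3+1$, which are exactly the entries $E((x_1,x_2,x_3))_{11}$ and $E((x_1,x_2,x_3))_{21}$ (abbreviated $E_{11}$, $E_{21}$). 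The first defining equation of $V(X_n)_{0,3}$ in (\ref{zerothree}) is the trace condition $E_{11}=E_{22}$, forcing $x_1x_2x_3+x_1+x_3=x_2$; hence $p_2+X_nq_2=E_{22}+X_nE_{21}=x_2+X_n(x_2x_3+1)=\eta_n$, using the formula for $\eta_n$ established in the proof of \cref{pcfofXn}. Applying $\sigma$ gives $\sigma(p_2+X_nq_2)=\sigma(\eta_n)$, so that $A_n=\langle-1,\ \sigma(\eta_n)\mid\sigma\in\Gal(\mathbb{B}_n/\Q)\rangle_{\Z}$.

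The inclusion $A_n\subseteq RE_n$ is then immediate: $N_{n/n-1}(\eta_n)=-1$ gives $\eta_n\in RE_n$, and since $\Gal(\mathbb{B}_n/\Q)$ is abelian every $\sigma$ commutes with $\tau_n$, whence $N_{n/n-1}(\sigma(\eta_n))=\sigma(N_{n/n-1}(\eta_n))=-1$; thus $RE_n$ is $\Gal(\mathbb{B}_n/\Q)$-stable and contains all the generators of $A_n$. For the reverse inclusion I would use the module structure of $RE_n$. From $\epsilon\,\tau_n(\epsilon)=N_{n/n-1}(\epsilon)\in\{\pm1\}$ we get $\tau_n(\epsilon)=\pm\epsilon^{-1}$, so $\tau_n$ acts as inversion on $RE_n/\{\pm1\}$. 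Writing $\Gal(\mathbb{B}_n/\Q)=\langle\gamma\rangle\cong\Z/2^n\Z$ with $\tau_n=\gamma^{2^{n-1}}$, this means $\gamma^{2^{n-1}}+1$ annihilates $RE_n/\{\pm1\}$, making it a module over $\Z[\gamma]/(\gamma^{2^{n-1}}+1)\cong\Z[\zeta_{2^n}]$. Since the only roots of unity in the totally real field $\mathbb{B}_n$ are $\pm1$, Dirichlet's theorem shows $RE_n/\{\pm1\}$ is free of rank $2^{n-1}=[\Q(\zeta_{2^n}):\Q]$ over $\Z$, hence a $\Z[\zeta_{2^n}]$-module of rank one; the subgroup $A_n/\{\pm1\}$ is the cyclic $\Z[\zeta_{2^n}]$-submodule generated by the class of $\eta_n$, and the two coincide precisely when $\eta_n$ generates $RE_n/\{\pm1\}$ as a $\Z[\zeta_{2^n}]$-module.

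It then remains to prove that this submodule exhausts $RE_n/\{\pm1\}$, i.e.\ that $[RE_n:A_n]=1$; this is the main obstacle and the only place \cref{conjecture} enters. The group $A_n$ is precisely the group of Weber normal (Horie) units, and the relative analogue of the Kummer--Sinnott cyclotomic-unit index formula expresses $[RE_n:A_n]$ as a class-number quantity attached to $\mathbb{B}_n/\mathbb{B}_{n-1}$, built from $h_n$ and $h_{n-1}$ (see \cite{Horie2005,MO2016,MO2020}). Under \cref{conjecture} one has $h_n=h_{n-1}=1$, so this index collapses to $1$ and $A_n=RE_n$. The delicate point is to pin down the index formula precisely---in particular to rule out any spurious power-of-$2$ or regulator factor obstructing the equality---which is exactly the analytic input underlying the cited results; the PCF machinery of this paper contributes only the explicit unit $\eta_n=p_2+X_nq_2$, after which the statement becomes a known property of Horie units.
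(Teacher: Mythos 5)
Your identification step and the easy inclusion are correct and coincide with the paper's argument: by the trace equation in (\ref{zerothree}), $p_2=E_{11}=E_{22}=x_2=\eta_{n-1}$ and $q_2=E_{21}=x_2x_3+1=(\eta_n-\eta_{n-1})/X_n$, hence $p_2+X_nq_2=\eta_n$ and $A_n=\langle -1,\ \sigma(\eta_n)\mid \sigma\in\Gal(\mathbb{B}_n/\Q)\rangle_{\Z}$; your form $p_2+X_nq_2$ (rather than the paper's misprinted $p_2X_n+q_2$) is indeed the one compatible with the bijection $W_n\cong RE_n$. The verification that $A_n\subseteq RE_n$ via $N_{n/n-1}(\sigma(\eta_n))=\sigma(N_{n/n-1}(\eta_n))=-1$ is also fine, and your $\Z[\zeta_{2^n}]$-module discussion, while correct, only restates that one must show $(RE_n:A_n)=1$.

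The gap is the final step, and you flag it yourself: the entire mathematical content of the proposition is the exact index formula $(RE_n:A_n)=h_n/h_{n-1}$, and you neither prove it nor point to a statement that demonstrably contains it; ruling out ``spurious power-of-$2$ or regulator factors'' is precisely what must be done here, not a side issue one may defer. The paper closes this as follows. First, $(RE_n:A_n)=(RE_n^{+}:A_n^{+})$ by \cite[Lemma 3.2, (2)]{MO2020}, where the superscript $+$ denotes the subgroup of elements of relative norm $1$. Second, with $C_n$ the group of cyclotomic units of $\mathbb{B}_n$, one has $A_n^{+}=RE_n^{+}\cap C_n$, and the relative norm induces an exact sequence
\[
1\longrightarrow RE_n^{+}/A_n^{+} \longrightarrow \Z[X_n]^{*}/C_n \longrightarrow \Z[X_{n-1}]^{*}/C_{n-1} \longrightarrow 1 .
\]
Third, since $\mathbb{B}_n$ is the maximal real subfield of the prime-power cyclotomic field $\Q(\zeta_{2^{n+2}})$, Kummer's theorem (\cite[Theorem 8.2]{Washington}) gives the \emph{exact} equality $(\Z[X_n]^{*}:C_n)=h_n$---this is exactly where the absence of a Sinnott-type $2$-power factor is guaranteed, answering the worry you raise but do not resolve. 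Multiplicativity of indices in the exact sequence then yields $(RE_n^{+}:A_n^{+})=h_n/h_{n-1}$, which is $1$ under \cref{conjecture}. These steps (due to \cite[Section 4]{Yoshizaki}) are what your appeal to \cite{Horie2005,MO2016,MO2020} would have to reproduce; as written, your argument reduces the proposition to an unverified citation and is therefore incomplete precisely at its crux.
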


\begin{proof}
Set $X_n=[\overline{a_1,a_2,a_3}]$ corresponding as \cref{pcfofXn}.
Then,
\[
\frac{p_2}{q_2}=\frac{a_1a_2a_3+a_1+a_3}{a_2a_3+1}=\frac{\eta_{n-1}}{(\eta_n-\eta_{n-1})/X_n}    
\]
and we see that $p_2X_n+q_2=\eta_n$. 
Thus,
we obtain
\[
A_n=\langle-1, \sigma(\eta_n)\mid \sigma\in\Gal(\mathbb{B}_n/\Q)\rangle_{\Z}.
\]
In what follows,
we will show $(RE_n : A_n)=h_n/h_{n-1}$ for $n\in\Z_{\geq 1}$ where $(RE_n : A_n)$ is the index.
Set
\[
A_n^{+}=\langle-1, \sigma(\eta_n)\mid \sigma\in\Gal(\mathbb{B}_n/\Q)\rangle_{\Z} \cap\{\epsilon \in \mathbb{Z}[X_n] \mid N_{n/n-1}(\epsilon)=1\}.
\]
By \cite[Lemma 3.2, (2)]{MO2020}, 
we see that $(RE_n : A_n)=(RE_n^+ : A_n^+)$.
Hence, it is sufficient to show $(RE_n^{+} : A_n^{+})=h_n/h_{n-1}$,
which is already shown in \cite[Section 4]{Yoshizaki}.
For convenience,
we will recall the outline of proof.

Let $C_n$ be the group of cyclotomic units in $\mathbb{B}_n$ (cf.\ \cite[Chapter 8]{Washington}).
Then, $A_n^+=RE_n^+\cap C_n$ and the relative norm induces the following exact sequence;
\[
1\rightarrow RE_n^+/A_n^+ \rightarrow \mathbb{Z}[X_n]^*/C_n \rightarrow \mathbb{Z}[X_{n-1}]^*/C_{n-1} \rightarrow 1.
\]
Since $(\Z[X_n]^* : C_n)=h_n$ (cf.\ \cite[Theorem 8.2]{Washington}), 
we obtain $(RE_n^+ : A_n^+)=h_n/h_{n-1}$.
\end{proof}



\appendix
\section{Convergence of PICFs}\label{appendix}

According to \cite[Theorem 4.3]{BEJ}, 
for checking the convergence of $[b_1, \overline{a_1, \dots, a_l}]$,
we should see whether the following conditions hold for $P:=(b_1, a_1,\dots, a_l)$;
\begin{enumerate}
\item\label{condition1} $E(P) \neq \pm I$. 
\item\label{condition2} For all $j=0, \dots,l-1$, $M([a_{j+1},...,a_{j+l}])_{21}\neq 0$ or $\abs{M([a_{j+1},...,a_{j+l}])_{22}} \le 1$.
\item\label{condition3} $(-1)^l\Tr(E(P))^2<0$ or $4\ge (-1)^l\Tr(E(P))^2$.
\end{enumerate}
Here, $a_{j+l}:=a_j$ for $j\geq 1$. 

In the following, we check these conditions for PICFs appeared in \cref{Maintheorem1}. 
\begin{itemize}
\item If $l=1$, we set $P:=(t,2t)$.
(\ref{condition1}) holds since 
\[
E(P)=
\begin{bmatrix}
t & t^2+1 \\ 
1 & t
\end{bmatrix}
\]
and $E(P) \neq \pm I$ for all non-zero integers $t$.

Next, we write down $M([a_{j+1}])_{21}$ for $j=0$.
Then we obtain
\begin{align*}
&M([2t])_{21}=1
\end{align*}
and (\ref{condition2}) holds since $M([a_{j+1}])_{21}$ is non-zero for $t \in \Z\setminus \{0\}$. 

We also find that (\ref{condition3}) holds since
\begin{align*}
(-1)^1\Tr(E(P))^2&=-(2t)^2<0
\end{align*}
for $t\in\Z \setminus \{0\}$.

\item If $l=2$, we set $P:=(st,2s,2st)$ and $P':=(st,s,2st)$. 
\begin{itemize}
\item We will check the conditions for $P$.
(\ref{condition1}) holds since 
\[
E(P)=
\begin{bmatrix}
2s^2t+1 & 2s^3t^2+2st \\ 
2s & 2s^2t+1
\end{bmatrix}
\]
and $E(P) \neq \pm I$ for all non-zero integers $s, t$.

Next, we write down $M([a_{j+1},a_{j+2}])_{21}$ for each $j=0,1$.
Then we obtain
\begin{align*}
&M([2s,2st])_{21}=2st,\\
&M([2st,2s])_{21}=2s,
\end{align*}
and (\ref{condition2}) holds since all $M([a_{j+1},a_{j+2}])_{21}$ are non-zero for $s,t \in \Z\setminus \{0\}$. 

We also obtain 
\begin{align*}
(-1)^2\Tr(E(P))^2&=(4s^2t+2)^2\geq 4
\end{align*}
since $2s^2t+1\neq0$ for $s,t \in \Z \setminus \{0\}$.
Hence (\ref{condition3}) holds for $s,t\in\Z \setminus \{0\}$.

\item We will check the conditions for $P'$. 
(\ref{condition1}) holds since 
\[
E(P')=
\begin{bmatrix}
s^2t+1 & s^3t^2+2st\\ 
s & s^2t+1
\end{bmatrix}
\]
and $E(P') \neq \pm I$ for all non-zero integers $s, t$.

Next, we write down $M([a_{j+1},a_{j+2}])_{21}$ for each $j=0,1$.
Then we obtain
\begin{align*}
&M([s,2st])_{21}=2st,\\
&M([2st,s])_{21}=s,
\end{align*}
and (\ref{condition2}) holds since all $M([a_{j+1},a_{j+2}])_{21}$ are non-zero for $s,t \in \Z\setminus \{0\}$. 

We also obtain 
\begin{align*}
(-1)^2\Tr(E(P'))^2&=(2s^2t+2)^2 \geq 4
\end{align*}
since $(s,t)\neq(\pm1,-1)$ by $m'_2(s,t)\leq 0$. 
Hence (\ref{condition3}) holds.
\end{itemize}

\item If $l=3$, we set $P:=(s+(4s^2+1)t,2s,2s,2(s+(4s^2+1)t))$, 
$Q:=(-2+t,1,-2,-1+2t)$, 
$R:=(-1+t,2,-1,1+2t)$,
$O:=(2+5t,-2,3,3+10t)$ and $L:=(1+5t,3,-2,3+10t)$. 

\begin{itemize}
\item We will check the conditions for $P$.
(\ref{condition1}) holds since 
\begin{align*}
&E(P)=
\begin{bmatrix}
16ts^4 + 4s^3 + 8ts^2 + 3s + t & (4s^2+1)m_3(s,t)\\
4s^2 + 1 & 16ts^4 + 4s^3 + 8ts^3 + 3s + t
\end{bmatrix}
\end{align*}
and $E(P) \neq \pm I$ for all integers $s, t$.

Next, we write down $M([a_{j+1},a_{j+2},a_{j+3}])_{21}$ for each $j=0,1,2$.
Then we obtain
\begin{align*}
&M([2s,2s,2(s+(4s^2+1)t)])_{21}=16ts^3 + 4s^2 + 4ts + 1,\\
&M([2s,2(s+(4s^2+1)t),2s])_{21}=16ts^3 + 4s^2 + 4ts + 1,\\
&M([2(s+(4s^2+1)t),2s,2s])_{21}=4s^2+1.
\end{align*}
and (\ref{condition2}) holds since all $M([a_{j+1},a_{j+2},a_{j+3}])_{21}$ are odd. 

We also obtain 
\begin{align*}
(-1)^3\Tr(E(P))^2&=-(32ts^4 + 8s^3 + 16ts^2 + 6s + 2t)^2\\
&=-4(16ts^4 + 4s^3 + 8ts^2 + 3s + t)^2<0.
\end{align*}
since $16ts^4 + 4s^3 + 8ts^2 + 3s + t\neq 0$.\footnote{
If $16ts^4 + 4s^3 + 8ts^2 + 3s + t = 0$, then $t=-(4s^3)/(4s^2+1)^2 \not\in \Z$ and this is contradiction. 
}
Hence (\ref{condition3}) holds.

\item We will check the conditions for $Q$.

(\ref{condition1}) holds since 
\begin{align*}
    E(Q)=
    \begin{bmatrix}
    -t & -t^2-1 \\ 
    -1 & -t
    \end{bmatrix}
\end{align*}
and $E(Q) \neq \pm I$ for all non-zero integers $t$.

Next, we write down $M([a_{j+1},a_{j+2},a_{j+3}])_{21}$ for each $j=0,1,2$.
Then we obtain
\begin{align*}
&M([1,-2,-1+2t])_{21}=-4t+3,\\
&M([-2,-1+2t,1])_{21}=2t,\\
&M([-1+2t,1,-2])_{21}=-1.
\end{align*}
and  (\ref{condition2}) holds since all $M([a_{j+1},a_{j+2},a_{j+3}])_{21}$ are non-zero.

We also obtain 
\begin{align*}
(-1)^3\Tr(E(P))^2&=-(-2t)^2<0
\end{align*}
for $t\in \Z\setminus \{0\}$ and (\ref{condition3}) holds.

\item  We will check the conditions for $R$.
(\ref{condition1}) holds since 
\begin{align*}
    E(R)=
    \begin{bmatrix}
    -t & -t^2-1 \\
    -1 & -t
    \end{bmatrix}
\end{align*}
and $E(R) \neq \pm I$ for all non-zero integers $t$.

Next, we write down $M([a_{j+1},a_{j+2},a_{j+3}])_{21}$ for each $j=0,1,2$.
Then we obtain
\begin{align*}
&M([2,-1,1+2t])_{21}=-2t,\\
&M([-1,1+2t,2])_{21}=4t+3,\\
&M([1+2t,2,-1])_{21}=-1,
\end{align*}
and (\ref{condition2}) holds since all $M([a_{j+1},a_{j+2},a_{j+3}])_{21}$ are non-zero.

We also obtain 
\begin{align*}
(-1)^3\Tr(E(R))^2&=-(-2t)^2<0
\end{align*}
for $t\in \Z\setminus \{0\}$ and (\ref{condition3}) holds.

\item We will check the conditions for $O$.
(\ref{condition1}) holds since 
\begin{align*}
    E(O)=
    \begin{bmatrix}
    -25t-7 & -125t^2-70t-10\\
    -5 & -25t-7
    \end{bmatrix}
\end{align*}
and $E(O) \neq \pm I$ for all integers $t$.

Next, we write down $M([a_{j+1},a_{j+2},a_{j+3}])_{21}$ for each $j=0,1,2$.
Then we obtain
\begin{align*}
&M([-2,3,3+10t])_{21}=30t+10,\\
&M([3,3+10t,-2])_{21}=-20t-5,\\
&M([3+10t,-2,3])_{21}=-5,
\end{align*}
and (\ref{condition2}) holds since all $M([a_{j+1},a_{j+2},a_{j+3}])_{21}$ are non-zero for all integers $t$.

We also obtain 
\begin{align*}
(-1)^3\Tr(E(O))^2&=-(-50t-14)^2<0
\end{align*}
for all integers $t$ and (\ref{condition3}) holds.
\end{itemize}

\item We will check the conditions for $L$.
(\ref{condition1}) holds since 
\begin{align*}
    E(L)=
    \begin{bmatrix}
    -25t-7 & -125t^2-70t-10\\
    -5 & -25t-7
    \end{bmatrix}
\end{align*}
and $E(L) \neq \pm I$ for all integers $t$.

Next, we write down $M([a_{j+1},a_{j+2},a_{j+3}])_{21}$ for each $j=0,1,2$.
Then we obtain
\begin{align*}
&M([3,-2,3+10t])_{21}=-20t-5,\\
&M([-2,3+10t,3])_{21}=30t+10,\\
&M([3+10t,3,-2])_{21}=-5,
\end{align*}
and (\ref{condition2}) holds since all $M([a_{j+1},a_{j+2},a_{j+3}])_{21}$ are non-zero for all integers $t$.

We also obtain 
\begin{align*}
(-1)^3\Tr(E(L))^2&=-(-50t-14)^2<0
\end{align*}
for all integers $t$ and (\ref{condition3}) holds.

\end{itemize}
\section*{Acknowledgements}
The authors would like to thank Prof.\ Ken-ichi Bannai and Prof.\ Tomokazu Kashio for their careful reading and valuable comments on a draft of this paper.
The authors express their sincere gratitude to Hohto Bekki and Shuji Yamamoto for their valuable comments.
The authors are also grateful to Yoshinosuke Hirakawa and Yuya Matsumoto for their careful reading and many valuable comments and suggestions. 
The first author was supported by JSPS KAKENHI Grant Number JP22J13607 and JST SPRING Grant Number JPMJSP1037.
The second author was supported by JSPS KAKENHI Grant Number JP22J10004.

\begin{bibdiv}
    \begin{biblist}
    \bibselect{Fracconti}
    \end{biblist}
\end{bibdiv}

\end{document}